\newtheorem{definition}{Definition}[section]
\newtheorem{theorem}{Theorem}[section]
\newtheorem{lemma}{Lemma}[section]
\newtheorem{corollary}{Corollary}[section]
\def\l{\lambda}
\def\L{\Lambda}
\def\<{\langle}
\def\>{\rangle}
\def\div{{\rm div}}
\def \ds{\displaystyle}
\def \vs{\vspace*{0.1cm}}
\begin{document}

\title{Local gradient estimate for harmonic functions on Finsler manifolds}

\author{Chao Xia}\address{Max-Planck-Institut f\"ur Mathematik in den Naturwissenschaften, Inselstr. 22, D-04103, Leipzig, Germany}
\email{chao.xia@mis.mpg.de}\thanks{The research leading to these results has received funding from the European Research Council
under the European Union's Seventh Framework Programme (FP7/2007-2013) / ERC grant
agreement no. 267087.}
\maketitle

\begin{abstract}{In this paper, we prove the local gradient estimate for harmonic functions on complete, noncompact Finsler measure spaces under the condition that the weighted Ricci curvature has a lower bound. As applications, we obtain Liouville type theorems on noncompact Finsler manifolds with nonnegative Ricci curvature.}

\end{abstract}

\section{Introduction}
The study of harmonic functions is one of the center topics in geometric analysis. In a seminal paper \cite{Y}, Yau proved that complete Riemannian manifolds with nonnegative Ricci curvature must have Liouville property. He derived a gradient estimate which is nowadays important and essential to the theory of harmonic functions. Later, Cheng-Yau \cite{CY} proved the following local version of Yau's gradient estimate.

\

\noindent{\bf Theorem A} (\cite{Y,CY}). {\it  Let $M^n$ be an n-dimensional complete noncompact
Riemannian manifold with $Ricci\geq -K$ $ (K \geq 0)$. Then
there exists a constant $C_n$ depending only on n, such that every positive harmonic function $u$
on geodesic ball $B_{2R}(p)\subset M$  satisfies
\begin{eqnarray*}
|\nabla  \log u|\leq C_n\frac{1+\sqrt{K}R}{R}  \hbox{ in }B_R(p).
\end{eqnarray*}
}

The main objective of this paper is to generalize the local gradient estimate to Finsler manifolds.
 
A Finsler manifold $M$ is a differential  manifold whose tangent spaces are equipped with Minkowski norms $F$, but not necessary inner products. A Finsler manifold equipped with a measure $m$ is called a Finsler measure space $(M,F,m)$. The class of Finsler measure spaces is one of the most important metric measure spaces.  There is a canonical gradient $\nabla u$ and Finsler-Laplacian $\Delta_m u$ of a function $u$, defined on a Finsler manifold. Also the harmonic functions on Finsler manifolds are  solutions of $\Delta_m u=0$. It is easy to see that the harmonic functions are the local minimizers of the energy functional $E(u)=\int_M F^2(x,\nabla u) dm$. Note that this energy functional coincides with Cheeger's one \cite{Ch, OS1} in terms of upper gradients for general metric measure spaces. Unlike the Laplacian on Riemannian manifolds, the Finsler-Laplacian  is a nonlinear operator. In fact,  this is the major difference between Finsler and other metric measure spaces (See \cite{ AGS, OS1}). 

On the other hand, the flag and Ricci curvatures are well defined on Finsler manifolds. As in Riemannian manifolds, they describe implicitly global properties of Finsler manifolds. On  Finsler measure spaces, the weighted Ricci curvature $Ric_N$ for $N\in [n,\infty]$ was introduced by Ohta \cite{Oh1}. He proved that the condition that  $Ric_N$ has a lower bound is equivalent to the curvature-dimension condition,  introduced by Lott-Villani \cite{LV} and Sturm \cite{St, St2}, for a Finsler measure space as a metric measure space.  We refer to Section 2, or  \cite{Oh1, OS1} for details of definitions.

Under the assumption that $Ric_N$ has a lower bound, we are able to generalize the local gradient estimate for harmonic functions on Riemannian manifolds to that for harmonic functions on  Finsler measure spaces whose Finsler structures $F$ are \textit{uniformly smooth} and \textit{uniformly convex}. The uniform smoothness and the uniform convexity mean that there exist two uniform constants $0<\l\leq\L<\infty$ such that for $x\in M,$ $V\in T_x M\setminus\{0\}$ and  $W\in T_xM$, we have
\begin{eqnarray}\label{unif} 
\l {F}^2(x, W)\leq \sum_{i,j=1}^n g_V(W,W)\leq\Lambda {F}^2(x,W),
\end{eqnarray}
where $g_V$ is the induced metric on the tangent bundle of corresponding Finsler manifolds, see \eqref{gV} in Section 2. 

Our main result is the following theorem.
\begin{theorem}\label{main thm}
Let $(M^n, F,m)$ be an $n$-dimensional forward geodesically complete, noncompact Finsler measure space,  equipped with a uniformly smooth and uniformly convex Finsler structure $F$ and a smooth measure $m$. 
  Assume that $Ric_N\geq -K$ for some real numbers $N\in [n,+\infty)$ and $K\geq 0$. Let $u$ be a positive harmonic function, i.e. $$\Delta_m u=0$$ in weak sense in a forward geodesic ball $B^+_{2R}(p)\subset M$. Then there exists some constant $C=C(N,\l,\L)$, depending on $N$, the uniform  constants $\l$ and  $\L$ in \eqref{unif},  such that
 \begin{eqnarray*}
\max\left\{ F(x,\nabla\log u(x)),F(x,\nabla (-\log u(x)))\right\}\leq C\frac{1+\sqrt{K}R}{R}\hbox{ in }B^+_R(p).
 \end{eqnarray*}
\end{theorem}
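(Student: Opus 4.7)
The plan is to adapt the classical Cheng-Yau maximum principle argument to the Finsler setting, with the Ohta-Sturm weighted Bochner inequality playing the role of the Riemannian Bochner formula.

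As a first step I would rewrite the harmonic equation as a quasilinear PDE for $f := \log u$. Since the gradient map $\nabla = \mathcal{L}^{-1}\circ d$ is positively $1$-homogeneous and $u > 0$, one has $\nabla u = u\,\nabla f$ on $\{du\neq 0\}$. Combined with the product rule $\div_m(uX) = u\,\div_m X + du(X)$ and the Legendre identity $du(\nabla f) = u\,df(\nabla f) = u\,F^2(\nabla f)$, the assumption $\Delta_m u = 0$ becomes
$$\Delta_m f \;=\; -F^2(\nabla f) \qquad \text{weakly in } B^+_{2R}(p).$$
Setting $H := F^2(\nabla f)$ and applying the Ohta-Sturm Bochner inequality to $f$ with $Ric_N \geq -K$, one obtains
$$\tfrac12\,\Delta^{\nabla f}H \;\geq\; \frac{H^2}{N} \;-\; dH(\nabla f) \;-\; K H$$
in the weak sense on the regular set $\{\nabla f \neq 0\}$. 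By \eqref{unif} the linearized Laplacian $\Delta^{\nabla f}$ is uniformly elliptic with ellipticity depending only on $\l,\L$, so standard elliptic theory (Nash-Moser) supplies the regularity needed to argue pointwise.

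The core of the proof is a Cheng-Yau cutoff-plus-maximum-principle argument. Let $\eta := \phi^2(d_F(p,\cdot)/R)$ for a fixed smooth non-increasing $\phi$ supported in $[0,2]$ with $\phi \equiv 1$ on $[0,1]$; the squaring is the standard trick that yields $F^*(d\eta) \leq C\,\eta^{1/2}/R$. The Finsler Laplacian comparison theorem under $Ric_N \geq -K$, combined with \eqref{unif} to keep the ellipticity under control, produces the companion estimate $\Delta^{\nabla f}\eta \geq -C(1+\sqrt{K}R)/R^2$ in the barrier sense across the cut locus, with $C = C(N,\l,\L)$. Let $x_0$ be an interior maximum of $\eta H$ on $\overline{B^+_{2R}(p)}$. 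From $d(\eta H)(x_0) = 0$ one gets $dH = -(H/\eta)\,d\eta$, together with $\Delta^{\nabla f}(\eta H)(x_0) \leq 0$; expanding this, substituting the Bochner inequality for $\eta\,\Delta^{\nabla f}H$, and absorbing the cross-term of size $\eta^{1/2} H^{3/2}/R$ into the $\eta H^2/N$ term on the right of Bochner via Young's inequality (with \eqref{unif} used to convert the $g_{\nabla f}$-norms of $d\eta$ back into $F^*$-norms), one arrives at
$$(\eta H)(x_0) \;\leq\; C(N,\l,\L)\left(K + \frac{1+\sqrt{K}R}{R^2}\right) \;\leq\; C'(N,\l,\L)\,\frac{(1+\sqrt{K}R)^2}{R^2}.$$
Restricting to $B^+_R(p)$ where $\eta \equiv 1$ then gives $F(\nabla\log u) \leq C''(1+\sqrt{K}R)/R$.

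The bound on $F(x,\nabla(-\log u))$ follows for free from the bound on $F(x,\nabla\log u)$, without needing to rerun the Bochner argument for the reverse structure. The Legendre identity $F(\nabla g) = F^*(dg)$, together with the dual reversibility estimate $F^*(-\alpha) \leq \sqrt{\L/\l}\,F^*(\alpha)$ that comes directly from dualizing $F(-V) \leq \sqrt{\L/\l}\,F(V)$ (itself an immediate consequence of \eqref{unif}), yields
$$F(\nabla(-\log u)) \;=\; F^*(-d\log u) \;\leq\; \sqrt{\L/\l}\;F^*(d\log u) \;=\; \sqrt{\L/\l}\;F(\nabla\log u),$$
so the combined constant still depends only on $N,\l,\L$. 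The main technical obstacle I anticipate throughout is the rigorous justification of the pointwise maximum-principle computation: $\Delta_m$ is nonlinear, defined only in the distributional sense, and singular on $\{\nabla u = 0\}$. This is handled by confining the pointwise argument to the regular set $\{\nabla u \neq 0\}$, where \eqref{unif} makes the linearized operator uniformly elliptic and Nash-Moser theory provides the classical smoothness needed, together with a standard smoothing procedure for the cutoff at the cut locus of $d_F(p,\cdot)$.
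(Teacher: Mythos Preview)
Your derivation of $\Delta_m f = -F^2(\nabla f)$ and your invocation of the Ohta--Sturm Bochner inequality are correct, and your reversibility shortcut for $F(\nabla(-\log u))$ at the end is valid and slightly slicker than the paper's ``rerun the argument''. But the heart of your proposal, the Cheng--Yau maximum-principle step, has a genuine gap that the paper explicitly identifies and avoids.

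The problem is the claimed bound $\Delta^{\nabla f}\eta \geq -C(1+\sqrt{K}R)/R^2$. The Finsler Laplacian comparison under $Ric_N\geq -K$ controls $\Delta_m d_p = \Delta^{\nabla d_p}_m d_p$, i.e.\ the trace of the Hessian of $d_p$ with respect to $g_{\nabla d_p}$. What you need is $\Delta^{\nabla f}_m d_p$, the trace of (essentially) the same Hessian with respect to the \emph{different} metric $g_{\nabla f}$. Uniform ellipticity \eqref{unif} tells you $g_{\nabla f}$ and $g_{\nabla d_p}$ are uniformly equivalent, but that does \emph{not} let you transfer a trace bound: if a symmetric matrix $A$ has $\mathrm{tr}(A)\leq C$ while its individual eigenvalues are uncontrolled, and $\lambda I\leq B\leq \Lambda I$, then $\mathrm{tr}(BA)$ is not bounded in terms of $C,\lambda,\Lambda$. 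A Ricci lower bound gives only the trace (Laplacian) comparison, not a Hessian comparison; the latter would require a flag curvature lower bound, which is not assumed. Equivalently, $\Delta^{\nabla f}$ is the weighted Laplacian of the Riemannian metric $g_{\nabla f}$, and you have no curvature information about that metric (cf.\ Remark~(i) after the theorem). This is precisely the ``unexpected term'' coming from the nonlinearity that the paper flags in the introduction as the reason the maximum principle fails in the noncompact Finsler case.

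The paper's route around this is entirely different: it keeps the Bochner inequality in its weak (integral) form, tests against $\phi^2 f^{\beta}$, uses \eqref{unif} only to get uniform ellipticity of the coefficients $g^{ij}(x,\nabla v)$ in the resulting energy inequality, and then runs a Moser iteration powered by a local Sobolev inequality (itself derived from volume doubling and a Poincar\'e inequality under $Ric_N\geq -K$). The cut-off enters only through $F^*(D\phi)$, so the Laplacian of $d_p$ is never needed.
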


\

The precise definitions of the Finsler measure spaces, forward geodesical completeness, weighted Ricci curvature $Ric_N$, gradient 
$\nabla$, Finsler-Laplacian $\Delta_m$ will be given in Section 2 below. We give several remarks on Theorem \ref{main thm}.

\

\noindent\textbf{Remarks.}
\begin{itemize}
\item[(i)] Theorem \ref{main thm} does not coincide with the local gradient estimate on weighted Riemannian manifold $(M, g_{\nabla u},m)$ since $Ric_N$ and the weighted Ricci curvature $Ric_N^{\nabla u}$ of $(M, g_{\nabla u},m)$ are different, as observed in \cite{OS2}. In fact, the weighted Ricci curvature $Ric_N$ introduced by Ohta depends only on the Finsler structure $F$ and the measure $m$, while the weighted Ricci curvature $Ric_N^{\nabla u}$ depends on $u$. A simple example  is $(\mathbb{R}^n, \|\cdot\|, m_{BH})$, where $\|\cdot\|$ is a Minkowski norm and $m_{BH}$ denotes the Busemann-Hausdorff measure. In this case, $Ric_n$ vanishes but $Ric_n^{\nabla u}$ does not.
\item[(ii)] For $N=n$, $Ric_n\geq -K$ means that Shen's $S$-curvature vanishes and $Ric\geq -K$. Finsler manifolds of \textit{Berwald type}  equipped with \textit{Busemann-Hausdorff measure} satisfies that Shen's $S$-curvature vanishes. In general, there may not exist any measure with vanishing $S$-curvature. See \cite{Sh, Oh1, Oh3}.
\item[(iii)] Ohta-Sturm \cite{OS2} proved the Li-Yau type gradient estimate for heat flow $u_t=\Delta_m u$ on compact Finsler manifolds. They proposed the question for noncompact Finsler manifolds. Theorem \ref{main thm} can be viewed as a special case, i.e., $u_t=0$. However, the Li-Yau type gradient estimate for noncompact Finsler manifolds is still unknown.
\item[(iv)] The uniform smoothness and the uniform convexity was first introduced in Banach space theory by Ball, Carlen and Lieb in \cite{BCL}. In \cite{Oh2}, Ohta gave a geometric interpretation of these two conditions for metric spaces (see (1.3) and (1.4) in \cite{Oh2}) and he also proved that a large class of Finsler manifolds satisfies them (see Theorem 4.2 and Theorem 5.1 in \cite{Oh2}).
\end{itemize}

\

In the Riemannian case, Cheng-Yau \cite{CY} combined the Bochner technique and maximum principle to prove the local gradient estimate. This method turned out to be very useful to many related problem, such as eigenvalue estimate, heat kernel estimate and so on. It is natural to use the same method to handle the Finsler case. For compact Finsler manifolds, it is possible to use the maximum principle since the Bochner formula for $F^2(x,\nabla u)$ is similar as in Riemannian case, see \cite{OS2}. For noncompact Finsler manifolds,  one should compute the equation for $\eta F^2(x,\nabla u)$ and use the maximum principle, where $\eta$ is some cut-off function. However, some unexpected term appears, mainly because of the nonlinearity of the Finsler-Laplacian. Thus the classical method of maximum principle does not work  for noncompact Finsler manifolds any more.

Fortunately, we find that the method of Moser's iteration is effective to avoid the difficulty caused by the nonlinearity of the Finsler-Laplacian. This is inspired by recent work by Wang-Zhang \cite{WZ} on similar result for $p$-Lapalcian on Riemannian manifolds. We will start from a Bochner formula established recently by Ohta-Sturm \cite{OS2} and ultilize carefully  Moser's iteration to prove Theorem \ref{main thm}.

Recently, the local gradient estimate has been extended to Alexandrov spaces by Zhang-Zhu \cite{ZZ}, see also \cite{HX}.

\

Several standard applications of Theorem \ref{main thm} are the Harnack inequality and the Liouville properties.
\begin{corollary}\label{11}
 Let $(M^n,F,m)$ be as in Theorem \ref{main thm} and  $u$ be a positive harmonic function in geodesic ball $B_{2R}^+(p)\subset M$. Then there exists some constant $C=C(N,\l,\L)$, depending on $N$, $\l$ and $\L$, such that
 \begin{eqnarray*}
\sup_{B_R^+(p)}u\leq e^{C(1+\sqrt{K}R)}\inf_{B_R^+(p)} u
 \end{eqnarray*}
\end{corollary}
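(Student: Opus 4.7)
The plan is to prove the Harnack inequality by the standard integration of the gradient estimate along forward geodesics. Fix $x,y\in B^+_R(p)$ and assume without loss of generality that $u(x)\geq u(y)$; the goal is to bound $\log(u(x)/u(y))$ from above.

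First I would join $y$ to $x$ by a unit-speed minimizing forward geodesic $\gamma\colon[0,L]\to M$ with $\gamma(0)=y$ and $\gamma(L)=x$. The length $L$ can be controlled by a constant multiple of $R$: either directly by the forward triangle inequality when the minimizing forward geodesic from $y$ to $x$ is itself short, or by concatenating forward geodesic segments passing through $p$, combining forward and backward triangle inequalities to absorb the asymmetry of $F$. Next, write
\[
 \log u(x)-\log u(y) \;=\; \int_0^L d(\log u)|_{\gamma(t)}(\dot\gamma(t))\,dt,
\]
and apply the Finsler Cauchy--Schwarz-type duality $d\varphi(V)\leq F^*(x,d\varphi)F(x,V)=F(x,\nabla\varphi)F(x,V)$. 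Since $\gamma$ has unit $F$-speed, this yields
\[
 d(\log u)|_{\gamma(t)}(\dot\gamma(t))\;\leq\; F\bigl(\gamma(t),\nabla\log u(\gamma(t))\bigr).
\]

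Now invoke Theorem \ref{main thm} to bound the integrand by $C(N,\l,\L)(1+\sqrt{K}R)/R$ along $\gamma$. Combined with $L\leq 2R$ this gives $\log u(x)-\log u(y)\leq 2C(N,\l,\L)(1+\sqrt{K}R)$. Exponentiating, and then taking suprema and infima over $B^+_R(p)$, produces the stated inequality after renaming the constant.

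The main subtlety, rather than a genuine obstacle, is the non-reversibility of $F$: the direction of travel along $\gamma$ determines whether the appropriate bound is on $F(\cdot,\nabla\log u)$ or on $F(\cdot,\nabla(-\log u))$, which is precisely why Theorem \ref{main thm} supplies both. A secondary technicality is to guarantee that $\gamma$ lies in a region where the gradient estimate is available; since $u$ is assumed harmonic on the larger ball $B^+_{2R}(p)$, there is enough room to apply Theorem \ref{main thm} at slightly shifted centers or enlarged radii so as to cover any excursions of $\gamma$ outside $B^+_R(p)$, with the resulting loss absorbed into the constant $C(N,\l,\L)$.
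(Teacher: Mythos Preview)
Your approach is essentially the same as the paper's: integrate the gradient bound from Theorem \ref{main thm} along a minimizing geodesic joining the two points, bound the length of that geodesic by a multiple of $R$, and exponentiate. The paper writes this in two lines, using the length bound $d(y,z)\leq(\rho+1)R$ coming from $d(y,z)\leq d(y,p)+d(p,z)\leq \rho d(p,y)+d(p,z)$; your claim $L\leq 2R$ should really be $L\leq(\rho+1)R$, but since $\rho$ is controlled by $\l,\L$ this is harmless and you already flag the non-reversibility issue. Your observation that the geodesic may leave $B_R^+(p)$ and that one must apply the gradient estimate on a slightly larger ball is a technical point the paper's proof glosses over, so in that respect your outline is more careful than the original.
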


\begin{corollary}\label{12}
 Let $(M^n,F,m)$ be as in Theorem \ref{main thm} with $K=0$. Then any positive harmonic function on $M$ must be a  constant.
 \end{corollary}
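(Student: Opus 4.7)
The plan is to deduce Corollary \ref{12} as a limit of the interior gradient estimate in Theorem \ref{main thm} as the radius tends to infinity. First, I would fix a basepoint $p\in M$ and an arbitrary radius $R>0$, and apply Theorem \ref{main thm} with $K=0$ on the forward geodesic ball $B^+_{2R}(p)$. This yields
\begin{eqnarray*}
\max\{F(x,\nabla\log u(x)),\,F(x,\nabla(-\log u(x)))\}\leq \frac{C}{R}
\end{eqnarray*}
for every $x\in B^+_R(p)$, with $C=C(N,\l,\L)$ independent of $R$.

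Second, I would let $R\to\infty$. Since $(M,F)$ is forward geodesically complete and noncompact, any fixed $x\in M$ lies in $B^+_R(p)$ as soon as $R$ exceeds the forward distance from $p$ to $x$; the inequality above therefore passes to the limit and gives $F(x,\nabla\log u(x))=0$ at every point of $M$. By the positive definiteness of the Minkowski norm $F$, the Finsler gradient $\nabla\log u$ vanishes identically. Since $\nabla$ is obtained from the differential via the Legendre transform, which is a fiberwise diffeomorphism off the zero section, vanishing of $\nabla\log u$ is equivalent to vanishing of $d(\log u)$, so $\log u$, and therefore $u$, is locally constant. Connectedness of $M$ upgrades this to $u$ being globally constant.

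I do not anticipate any substantive obstacle, as the entire deduction mirrors the classical Riemannian Liouville theorem of Yau and every nontrivial ingredient is already supplied by Theorem \ref{main thm}. The only Finsler-specific point worth flagging is the role of the hypothesis that $u$ is positive: the gradient estimate controls $F(x,\nabla\log u)$ rather than $F(x,\nabla u)$ directly, so positivity of $u$ is what allows the conclusion on $u$ itself to be recovered from the bound on $\log u$.
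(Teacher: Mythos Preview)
Your proposal is correct and follows essentially the same approach as the paper: apply Theorem \ref{main thm} with $K=0$, let $R\to\infty$ to force $F(x,\nabla\log u)=0$ everywhere, and conclude that $u$ is constant. The paper's proof is in fact a one-line version of exactly what you wrote.
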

 
 \begin{corollary}\label{13}
 Let $(M^n,F,m)$ be as in Theorem \ref{main thm} with $K=0$. Then any harmonic function $u$ satisfying $$\lim_{d(p,x)\to \infty} \frac{|u(x)|}{d(p,x)}=0\hbox{ for some }p\in M$$  must be a constant.\end{corollary}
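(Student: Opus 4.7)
\noindent\textbf{Proof proposal for Corollary \ref{13}.}

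The plan is to reduce the statement to Theorem \ref{main thm} (with $K=0$) by translating $u$ so that it becomes a positive harmonic function on a large forward geodesic ball, and then letting the ball radius tend to infinity. Fix a point $x_0\in M$, choose $R>0$ so large that $x_0\in B^+_R(p)$, and set $M_R:=\sup_{B^+_{2R}(p)}|u|$. Forward geodesic completeness together with the Finslerian Hopf--Rinow theorem makes the closed forward ball $\overline{B^+_{2R}(p)}$ compact, so $M_R<\infty$; moreover, the sublinear growth hypothesis forces $M_R/R\to 0$ as $R\to\infty$. The function $v:=u+2M_R+1$ is then strictly positive on $B^+_{2R}(p)$, and it is harmonic because the Finsler--Laplacian depends only on $du=dv$. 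Theorem \ref{main thm} with $K=0$ applied to $v$ yields
\[
\max\{F(x,\nabla\log v),\, F(x,\nabla(-\log v))\}\le \frac{C(N,\l,\L)}{R}\quad\text{on } B^+_R(p).
\]

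The next step is to turn this logarithmic gradient estimate into an estimate on $F(x,\nabla u)$. Since the Legendre transform $\mathcal{L}:T^*M\to TM$ underlying $\nabla f=\mathcal{L}(df)$ is positively homogeneous of degree one and $1/v>0$, one has $\nabla(\pm\log v)=\mathcal{L}(\pm dv/v)=\frac{1}{v}\,\mathcal{L}(\pm dv)=\frac{1}{v}\,\nabla(\pm u)$, where the last equality uses that adding a constant does not change the differential. Combining this with the positive homogeneity of $F$ and the pointwise bound $v\le 3M_R+1$ on $B^+_R(p)$ delivers
\[
F\bigl(x_0,\nabla u(x_0)\bigr)\le \frac{(3M_R+1)\,C(N,\l,\L)}{R}.
\]

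Holding $x_0$ fixed and sending $R\to\infty$, the right-hand side tends to zero because $M_R/R\to 0$. Positive definiteness of the Minkowski norm (encoded in the uniform convexity \eqref{unif}) then forces $\nabla u(x_0)=0$, hence $du(x_0)=0$; since $x_0\in M$ was arbitrary, $u$ must be constant. I do not expect any substantive obstacle here: the only point requiring mild care is the non-reversibility of $F$, which is why Theorem \ref{main thm} provides estimates for both $\nabla\log v$ and $\nabla(-\log v)$. For the present Liouville argument a one-sided bound already suffices, since $F(x_0,\nabla u(x_0))=0$ alone implies $du(x_0)=0$.
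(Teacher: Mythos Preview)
Your proof is correct and follows essentially the same approach as the paper: translate $u$ by a constant depending on $\sup_{B_{2R}^+}|u|$ to obtain a positive harmonic function on $B_{2R}^+(p)$, apply Theorem~\ref{main thm} with $K=0$, convert the logarithmic estimate into a bound $F(x,\nabla u)\le C\sup_{B_{2R}^+}|u|/R$, and let $R\to\infty$ using the sublinear growth hypothesis. Your write-up is in fact more careful than the paper's on several points (the use of Hopf--Rinow for compactness, the homogeneity of the Legendre transform, and the role of non-reversibility).
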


 In the remaining part of this paper, we will first recall the basis of Finsler manifolds. Then we prepare the analytic tool, the Sobolev
inequality, for  Moser's iteration. Finally we prove
Theorem \ref{main thm} and the corollaries.

\

 \section{Preliminaries on Finsler geometry}

In this section  we briefly recall the fundamentals of Finsler geometry,
 as well as  the recent developments on the analysis of Finsler geometry by Ohta-Sturm \cite{Oh1, OS1, OS2}.
For Finsler geometry, we refer to \cite{BCS, Sh}.

\subsection{Finsler measure spaces}

Let $M^n$ be a smooth, connected $n$-dimensional  manifold. A function $F:TM\to[0,\infty)$ is called a \textit{Finsler structure} (or \textit{Minkowski norm}) if it satisfies the following properties:
\begin{itemize}
\item[(i)] $F$ is $C^\infty$ on $TM\setminus\{0\}$;
\item [(ii)] $F(x, tV) = t F(x, V)$ for all $(x, V)\in TM$ and all $t > 0$;
\item [(iii)] for every $(x, V)\in TM\setminus\{0\}$, the matrix
\begin{eqnarray*}
g_{ij}(x,V):=\frac{\partial^2}{\partial V_i\partial V_j}(\frac12 F^2)(x, V)
\end{eqnarray*}
 is positive definite.
\end{itemize}
Such a pair $(M^n,F)$ is called a \textit{Finsler manifold}. By a {\it Finsler measure space}  we mean  a triple $(M^n,F,m)$ constituted with a smooth, connected $n$-dimensional manifold $M$, a Finsler structure $F$ on $M$ and a measure $m$ on $M$. For every non-vanishing vector field $V$, $g_{ij}(x,V)$ induces a Riemannian structure $g_V$ on $M$  via
\begin{eqnarray}\label{gV}
g_V(X,Y)=\sum_{i,j=1}^n g_{ij}(x,V)X^iY^j, \hbox{ for }X,Y\in T_xM.
\end{eqnarray}
In particular, $g_V(V,V)=F^2(x,V)$.

A Finsler structure is said to be \textit{reversible} if, in addition, $F$ is even. Otherwise $F$
is non-reversible. 
One can define the \textit{reversed norm} $\bar{F}(x,V):=F(x,-V)$ and the modulus of reversibility \begin{eqnarray}\label{reverse}
\rho:=\sup_{V\neq 0}\frac{F(x,V)}{\bar{F}(x,V)}.
\end{eqnarray}
 It is easy to see that  $\rho\equiv 1$ for reversible $F$. In general, under the assumption of uniform smoothness and convexity, $\rho$ can be estimated by $\l$ or $\L$ by using \eqref{unif}:
\begin{eqnarray}\label{rho}
&&F^2(x,V)\leq \frac{1}{\l}g_{ij}(x,-V)V^iV^j=\frac{1}{\l}F^2(x,-V),\\&&F^2(x,-V)\geq \frac{1}{\L}g_{ij}(x,V)(-V^i)(-V^j)=\frac{1}{\L}F^2(x,V).
\end{eqnarray}

For $p,q\in M$, the \textit{distance function} from $p$ to $q$ is defined by
$$d_p(q):=d(p,q):=\inf_\gamma \int_0^1 F(\gamma(t),\dot{\gamma}(t))dt,$$
where the infimum is taken over all $C^1$-curves $\gamma:[0,1]\to M$ such that $\gamma(0)=p$ and $\gamma(1)=q$. Note that the distance function may not be symmetric unless $F$ is reversible.
A $C^\infty$-curve $\gamma:[0,1]\to M$ is called a \textit{geodesic} (of constant speed) if $F(\gamma,\dot{\gamma})$ is constant and it is locally minimizing.
%The \textit{diameter} of $M$ is defined by $$d:=\sup_{x,y\in M} d(x,y).$$
The \textit{forward geodesic balls} are defined by
$$B^+_R(p):=\{q\in M: d(p,q)<R\}.$$ 
The \textit{exponential map} $\exp_p :T_pM\to M$ is defined by $\exp_p(v)=\gamma(1)$ for $v\in T_pM$ if there is a  geodesic $\gamma:[0,1]\to M$ with $\dot{\gamma}(0)=v$. A Finsler manifold $(M,F)$ is said to be \textit{forward geodesically complete} if the exponential map is defined on the entire $TM$. By Hopf-Rinow theorem (see \cite{BCS}), any two points in $M$ can be connected by a minimal forward geodesic and the forward closed balls $\overline{B_R^+(p)}$ are compact.
For a point $p\in M$ and a unit vector $v\in T_pM$, let $t_0=\sup\{t>0|\hbox{ the geodesic }\exp_p(tv)\hbox{ is minimal}\}$. If $t_0<\infty$, we call $\exp_p(t_0v)$ a \textit{cut point} of $x$. All the cut points of $x$ is said to be the \textit{cut locus} of $p$.  The cut locus of $p$ always has null measure and $d_p$ is $C^1$ outside the cut locus of $p$ (see \cite{BCS}).

%Let $\pi: TM\setminus\{0\}\to M$ the projection map. The pull-back bundle $\pi^*TM$ admits a 

There exists a unique linear connection, which is called the \textit{Chern connection}, on Finsler manifolds. The Chern connection is determined by the following structure equations, which characterize ``torsion freeness":
\begin{eqnarray*}\label{torsion free}
D_X^V Y-D_Y^V X=[X,Y]\end{eqnarray*}
 and ``almost $g$-compatibility"
\begin{eqnarray*}\label{compatible}
Z(g_V(X,Y))=g_V(D_Z^V X,Y)+g_V(X,D_Z^V Y)+C_V(D_Z^V V,X,Y)\end{eqnarray*}
for $V\in TM\setminus\{0\}, X,Y,Z\in TM$. Here $$C_V(X,Y,Z):=C_{ijk}(V)X^i Y^j Z^k=\frac{1}{4}\frac{\partial^3 F^2}{\partial V^i V^j V^k}(\cdot,V)X^i Y^j Z^k$$ denotes the \textit{Cartan tensor}. %and $D_X^V Y$ the \textit{covariant derivative} with respect to reference vector $V\in T_xM\setminus \{0\}$.
%We mention here that $C_V(V,X,Y)=0$ due to the homogeneity of $F$.%
%In terms of the Chern connection, a geodesic $\gamma$ satisfies $D_{\dot{\gamma}}^{\dot{\gamma}} {\dot{\gamma}}=0$.
%For local computations in Finsler geometry, we refer to \cite{Sh}.

\vspace{2mm}
\subsection{Gradient, Hessian and Finsler-Laplacian}

We shall introduce the Finsler-Laplacian on Finsler measure spaces.

Given a Finsler structure $F$ on $M$, there is a natural dual norm $F^*$ on the cotangent bundle $T^*M$, which is defined by
$$F^*(x,\xi):=\sup_{F(x, V)\leq 1} \xi(V) \hbox{ for any }\xi\in T_x^*M.$$
One can show that  $F^*$ is also a Minkowski norm on $T^*M$ and
$$g_{ij}^*(x,\xi):=\frac{\partial^2}{\partial \xi_i\partial \xi_j}(\frac12 F^{*2})(x, \xi)$$ is positive definite for every $(x, \xi)\in T^*M\setminus\{0\}$.

The \textit{Legendre transform} is defined by the map $l:T_xM\to T_x^*M:$
\begin{equation*}l(V):=\left\{
\begin{array}{lll} g_V(V,\cdot)&\hbox{ for }V\in T_xM\setminus\{0\},\\
0 &\hbox{ for }V=0.\\
\end{array}
\right.
\end{equation*}
Denote by $g^{ij}(x, V)$ the inverse matrix of $g_{ij}(x, V)$.
One can verify that $$F(V)=F^*(l(V)), \hbox{ for all } V\in TM\hbox{ and }g_{ij}^*(x,l(V))=g^{ij}(x, V).$$
From the uniform smoothness and convexity \eqref{unif} one easily see that $g^{ij}$ is  uniform elliptic in the sense that there exists two constants $0<\tilde{\l}\leq\tilde{\L}<\infty$, depending on $\l$ and $\L$, such that for $x\in M$, $V\in T_x M\setminus\{0\}$ and $\xi\in T_x^*M$, we have
\begin{eqnarray}\label{unif1} 
\tilde{\l} {F^*}^2(x,\xi)\leq \sum_{i,j=1}^n g^{ij}(x,V)\xi_i\xi_j\leq\tilde{\Lambda} {F^*}^2(x,\xi). \end{eqnarray}
 
Let $u:M\to \mathbb{R}$ be a smooth function on $M$ and $Du$ be its differential $1$-form. The \textit{gradient} of $u$ is defined as $\nabla u(x):=l^{-1}(Du(x))\in T_xM$. Denote $M_u:=\{Du\neq 0\}$. Locally we can write in coordinates
\begin{equation*}
\nabla u=\sum_{i,j=1}^n g^{ij}(x,\nabla u)\frac{\partial u}{\partial x_i}\frac{\partial}{\partial x_j} \hbox{ in } M_u.
\end{equation*}
Notice that $g^{ij}(x,\nabla u)=g_{ij}^*(x,Du)$ and $F^2(x,\nabla u)={F^*}^2(x,Du)$.

The \textit{Hessian} of $u$ is defined by using Chern connection as
\begin{eqnarray*}\label{hess sym}
\nabla^2 u(X,Y)=g_{\nabla u}(D_X^{\nabla u} \nabla u, Y),
\end{eqnarray*}
One can  show that $\nabla^2 u(X,Y)$ is symmetric, see \cite{OS2, WX}. %Indeed, using (\ref{torsion free}) and (\ref{compatible}) and noticing that $C_{\nabla u}(\nabla u, X, Y)=0$, we have
%\begin{eqnarray*}
%g_{\nabla u}(D_X^{\nabla u} \nabla u, Y)
%&=& X(g_{\nabla u}(\nabla u, Y))-g_{\nabla u}(\nabla u, D_X^{\nabla u} Y)\\
%&=& XY(u)-g_{\nabla u}(\nabla u, D_Y^{\nabla u} X+[X,Y])\\
%&=& YX(u)+[X,Y](u)-g_{\nabla u}(\nabla u, D_Y^{\nabla u} X)-[X,Y](u)\\
%&=& Y(g_{\nabla u}(\nabla u, X))-g_{\nabla u}(\nabla u, D_Y^{\nabla u} X)
%=g_{\nabla u}(D_Y^{\nabla u} \nabla u, X).
%\end{eqnarray*}

 Let $V\in TM$ be a smooth vector field on $M$. The \textit{divergence} of $V$ with respect to $m$ is defined by
\begin{eqnarray*}
\div_m V dm=d(V\lrcorner dm),
\end{eqnarray*}
where $V\lrcorner dm$ is the $(n-1)$ form given by $$V\lrcorner dm(W_1,\cdots,W_{n-1})=dm(V,W_1,\cdots,W_{n-1}),\quad W_i\in TM, \quad i=1,\cdots, n-1.$$
In local coordinates $(x^i)$, expressing $dm=e^\Phi dx^1 dx^2\cdots dx^n$, we can write $\div_m V$ as
\begin{eqnarray*}
\div_m V=\sum_{i=1}^n\left(\frac{\partial V^i}{\partial x^i}+V^i\frac{\partial \Phi}{\partial x^i}\right).
\end{eqnarray*}

The \textit{Finsler-Laplacian},  can now be defined by
\begin{eqnarray*}
\Delta_m u:=\div_m(\nabla u).
\end{eqnarray*}

Recall that the classes $L^2_{\rm{loc}}(M)$ and $W^{1,2}_{\rm{loc}}(M)$ are defined only in terms of the manifold structure of $M$ (independent of $F$ and $m$). Let 
$$W^{1,2}(M)=\left\{u\in W^{1,2}_{\rm{loc}}(M)\bigcap L^2(M): \int_M F^2(x,\nabla u)dm<\infty\right\}$$ and $W_0^{1,2}(M)$  be the closure of $C_0^\infty(M)$ under the norm $$\|u\|_{W^{1,2}(M)}=\|u\|_{L^2(M)}+\left(\int_M F^2(x,\nabla u)dm\right)^{\frac12}.$$

We remark that  the Finsler-Laplacian is better to  be viewed  in a weak sense due to the lack of regularity, that is, for $u\in W^{1,2}(M)$,
\begin{eqnarray*}
\int_M \phi\Delta_m u dm=-\int_M D\phi(\nabla u) dm \hbox{ for }\phi\in C_0^\infty(M).
\end{eqnarray*}

%The relationship between $\Delta_m u$ and $\nabla^2 u$ is that
%\begin{eqnarray*}
%\Delta_m u +D\Psi(\nabla u)= \tr_{g_{\nabla u}}(\nabla^2 u)=\sum_{i=1}^n \nabla^2 u(e_i,e_i),
%\end{eqnarray*}
%where $\Psi$ is defined by $dm=e^{-\Psi(V)}d\hbox{Vol}_{g_V}$ and $\{e_i\}$ is an orthonormal basis of $T_xM$ with respect to $g_{\nabla u}$. See e.g. \cite{WuX}, Lemma 3.3.

One can also define a weighted Laplacian on $M$. Given a weakly differentiable function $u$ and a vector field $V$ which does not vanish on $M_u$, the \textit{weighted Laplacian} is defined on the weighted Riemannian manifold $(M,g_V,m)$ by
 \begin{eqnarray*}
\Delta_m^V u:=\div_m(\nabla^V u),
\end{eqnarray*}
where
\begin{equation*}\nabla^V u:=\left\{
\begin{array}{lll} \sum_{i,j=1}^n g^{ij}(x,V)\frac{\partial u}{\partial x_i}\frac{\partial}{\partial x_j}&\hbox{ for }V\in T_x M\setminus \{0\},\\
0 &\hbox{ for }V=0.\\
\end{array}
\right.
\end{equation*}

Similarly, the weighted Laplacian can be viewed in a weak sense. We note that $\Delta_m^{\nabla u} u=\Delta_m u$.

\vspace{2mm}
\subsection{Weighted Ricci curvature and Bochner-Weitzenb\"ock formula}

The Ricci curvature of Finsler manifolds is defined as the trace of the flag curvature.
Explicitly, given two linearly independent vectors $V,W\in T_xM\setminus\{0\}$, the \textit{flag curvature} is defined by
$$\mathcal{K}^V(V,W)=\frac{g_V(R^V(V,W)W,V)}{g_V(V,V)g_V(W,W)-g_V(V,W)^2},$$
where $R^V$ is the \textit{Chern curvature} (or \textit{Riemannian curvature}):
$$R^V(X,Y)Z=D_X^V D_Y^V Z-D_Y^V D_X^V Z-D_{[X,Y]}^V Z.$$
Then the \textit{Ricci curvature} is
$$Ric(V):=F(V)^2\sum_{i=1}^{n-1} \mathcal{K}^V(V,e_i),$$ where $e_1,\cdots,e_{n-1},\frac{V}{F(V)}$ form an orthonormal basis of $T_xM$ with respect to $g_V$.

We recall the definition of  the weighted Ricci curvature on Finsler measure spaces, which was introduced by Ohta in \cite{Oh1},
motivated by the work of Lott-Villani \cite{LV} and Sturm \cite{St,St2} on general metric measure spaces.

\begin{definition}[\cite{Oh1}]
Given a unit vector $V\in T_x M$, let $\eta: [-\varepsilon,\varepsilon]\to M$ be the
geodesic such that $\dot{\eta}(0)=V$. Decompose $m$ as $m=e^{-\Psi} d \hbox{vol}_{\dot{\eta}}$ along $\eta$, where $\hbox{vol}_{\dot{\eta}}$ is the volume form of $g_{\dot{\eta}}$ as a Riemannian metric.
Then
%\begin{itemize}
%\item[(i)]
\begin{equation*}
\begin{array}{rcl} \ds\vs 
Ric_n(V)&:=&\left\{
\begin{array}{lll} \ds\vs Ric(V)+(\Psi\circ\eta)''(0) &\hbox{ if }(\Psi\circ \eta)'(0)=0,\\
-\infty &\hbox{ otherwise };
\end{array}\right. \\
%\end{equation*}
\ds\vs Ric_N(V)&:=&Ric(V)+(\Psi\circ\eta)''(0)-\frac{(\Psi\circ\eta)'(0)^2}{N-n},\hbox{ for }N\in(n,\infty);
\\
\ds Ric_\infty(V)&:=& Ric(V)+(\Psi\circ\eta)''(0).
\end{array}
\end{equation*}
For $c\geq 0$ and $N\in [n,\infty]$,  define $$Ric_N(cV):=c^2Ric_N(V).$$
\end{definition}

Ohta proved in \cite{Oh1} that, for $K\in\mathbb{R}$, the condition $Ric_N(V)\geq KF^2(V)$ is equivalent to Lott-Villani and Sturm's \textit{weak curvature-dimension condition} $CD(K,N)$.
We remark that $Ric_n(V)=Ric(V)$ for Berwald manifolds equipped with Busemann-Hausdorff measure (see \cite{Sh} and \cite{Oh1}). In particular, $Ric_n\equiv 0$ for $(\mathbb{R}^n, \|\cdot\|, m_{BH})$ ($m_{BH}$ is just a scalar multiply the Lesbegue measure).

The  following  Bochner-Weitzenb\"ock type formula,  established by Ohta-Sturm  \cite{OS2},
will play a role as the starting point in this paper.
\begin{theorem}[\cite{OS2}, Theorem 3.6]\label{BW formula}
Given $u\in W_{\rm{\rm{loc}}}^{2,2}(M)\bigcap C^1(M)$ with $\Delta_m u\in W_{\rm{\rm{loc}}}^{1,2}(M)$, we have
\begin{eqnarray*} -\int_M D\eta\left(\nabla^{\nabla u}\left(\frac{F^2(x,\nabla u)}{2} \right)\right) dm=
\int_M \eta\bigg\{D(\Delta_m u)(\nabla u)+
 Ric_\infty(\nabla u)+\|\nabla^2 u\|^2_{HS(\nabla u)}\bigg\} dm\end{eqnarray*}
 as well as
\begin{eqnarray*} -\int_M D\eta\left(\nabla^{\nabla u}\left(\frac{F^2(x,\nabla u)}{2} \right)\right) dm\geq \int_M \eta\bigg\{D(\Delta_m u)(\nabla u)+  Ric_N(\nabla u)+\frac{(\Delta_m u)^2}{N}\bigg\} dm\end{eqnarray*}
for any $N\in [n,\infty]$ and all nonnegative functions $\eta\in W_0^{1,2}(M)\bigcap L^\infty(M)$. Here $\|\nabla^2 u\|^2_{HS(\nabla u)}$ denotes the Hilbert-Schmidt norm with respect to $g_{\nabla u}$.
\end{theorem}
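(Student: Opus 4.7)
My plan is to reduce the formula to a classical weighted Bochner--Bakry--\'Emery identity on the auxiliary Riemannian weighted manifold $(M, g_{\nabla u}, m)$, integrate by parts against $\eta$, and handle the (possibly singular) zero set of $Du$ by a cutoff approximation. The $Ric_N$-inequality then drops out by applying Cauchy--Schwarz to the Hilbert--Schmidt norm of the Hessian together with an elementary completion of squares.

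First I would work pointwise on the open set $M_u = \{Du \neq 0\}$, where $\nabla u$ is a nonvanishing vector field and $g_{\nabla u}$ defines a genuine Riemannian structure. On $M_u$ the Finsler--Laplacian coincides with the weighted Riemannian Laplacian $\Delta_m^{\nabla u}$; the Finsler Hessian $\nabla^2 u$ equals the Riemannian $g_{\nabla u}$-Hessian of $u$; and the Finsler weighted Ricci $Ric_\infty(\nabla u)$ agrees with the Bakry--\'Emery Ricci of $(M, g_{\nabla u}, m)$ evaluated in the direction $\nabla u$, since along the geodesic in direction $V$ the Chern connection paired with $\dot\eta$ reduces to the Levi-Civita connection of $g_V$. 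The standard weighted Bochner formula on $(M, g_{\nabla u}, m)$ applied to $u$ therefore yields pointwise on $M_u$
$$
\tfrac12\,\Delta_m^{\nabla u}\!\big(F^2(x,\nabla u)\big) \;=\; \|\nabla^2 u\|^2_{HS(\nabla u)} + D(\Delta_m u)(\nabla u) + Ric_\infty(\nabla u).
$$
Multiplying by a nonnegative $\eta \in C_0^\infty(M_u)$ and using the weak definition of $\Delta_m^{\nabla u}$ turns the left side into $-\int D\eta\big(\nabla^{\nabla u}(F^2(\nabla u)/2)\big)\,dm$, giving the first identity. A density argument, together with the vanishing of $F^2(\nabla u)$ and of $\nabla^{\nabla u}$ on $\{Du=0\}$, then extends the class of admissible $\eta$ to all of $W_0^{1,2}(M)\cap L^\infty(M)$.

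For the inequality form, I would combine the trace Cauchy--Schwarz bound $\|\nabla^2 u\|^2_{HS(\nabla u)} \geq (\Delta^{g_{\nabla u}} u)^2/n$ with the local expression $\Delta^{g_{\nabla u}} u = \Delta_m u + D\Psi(\nabla u)$, where $m = e^{-\Psi} d\mathrm{vol}_{g_{\nabla u}}$. Setting $a = \Delta_m u$ and $b = D\Psi(\nabla u)$, the elementary algebraic identity
$$
\frac{(a+b)^2}{n} \;=\; \frac{a^2}{N} - \frac{b^2}{N-n} + \frac{\big((N-n)a + Nb\big)^2}{nN(N-n)},
$$
together with the definitional relation $Ric_\infty(\nabla u) - Ric_N(\nabla u) = b^2/(N-n)$, converts the pointwise Bochner equality, after discarding the nonnegative square, into
$$
\tfrac12\,\Delta_m^{\nabla u}\!\big(F^2(\nabla u)\big) \;\geq\; D(\Delta_m u)(\nabla u) + Ric_N(\nabla u) + \tfrac{1}{N}(\Delta_m u)^2.
$$
Integrating against $\eta$ and performing the same integration by parts as before yields the second assertion.

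The main obstacle is to justify these manipulations at and near $\{Du = 0\}$, where $g_{\nabla u}$ is not defined and $\nabla u$ is not smooth, while only assuming $u\in W_{\mathrm{loc}}^{2,2}\cap C^1$ with $\Delta_m u\in W_{\mathrm{loc}}^{1,2}$. The nonlinearity of $\Delta_m$ forces one to work with the weak formulation throughout rather than differentiating a pointwise identity; in particular, the cutoff--limit argument must be organized so that all integrands remain in $L^1$, dominated convergence is available, and no spurious contribution from the zero set of $Du$ survives. This is the genuinely Finsler (as opposed to Riemannian) aspect of the proof and is the delicate step carried out in Ohta--Sturm \cite{OS2}.
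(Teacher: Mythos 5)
First, note that the paper does not prove this statement at all: it is quoted verbatim from Ohta--Sturm \cite{OS2}, so the only available comparison is with that source. Measured as a self-contained argument, your reduction has a genuine gap at its pivotal step, namely the claim that ``the standard weighted Bochner formula on $(M,g_{\nabla u},m)$ applied to $u$ therefore yields pointwise on $M_u$'' the Finsler identity. This rests on two term-by-term identifications that are false in general. (i) The Finsler Hessian is not the $g_{\nabla u}$-Hessian: the Chern connection with reference vector $\nabla u$ is not the Levi-Civita connection of $g_{\nabla u}$, and a Koszul-type computation using the almost-$g$-compatibility equation gives
\begin{equation*}
\mathrm{Hess}^{g_{\nabla u}}u\,(X,Y)-\nabla^2u(X,Y)=\tfrac12\,C_{\nabla u}\bigl(D^{\nabla u}_{\nabla u}\nabla u,\,X,\,Y\bigr),
\end{equation*}
which is generically nonzero whenever the Cartan tensor is nonzero (it vanishes only when one slot is $\nabla u$). (ii) $Ric_\infty(\nabla u)$ is not the Bakry--\'Emery Ricci of $(M,g_{\nabla u},m)$ in the direction $\nabla u$. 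The fact you invoke---that Chern-connection curvature in direction $V$ reduces to Riemannian curvature of $g_V$---is valid when $V$ is extended as a \emph{geodesic field} (all integral curves are geodesics), which $\nabla u$ is not unless $u$ is distance-like. Remark (i) of this very paper is an explicit counterexample to your identification: for $(\mathbb{R}^n,\|\cdot\|,m_{BH})$ one has $Ric_n\equiv 0$ while the weighted Ricci curvature of $(\mathbb{R}^n,g_{\nabla u},m_{BH})$ does not vanish.

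What is true is only the aggregate statement, and that is essentially the content of the theorem rather than something one may assume: since $F^2(x,\nabla u)=g_{\nabla u}(\nabla u,\nabla u)$, $\Delta_m u=\Delta_m^{\nabla u}u$, and $\Delta_m^{\nabla u}$ is the weighted Laplacian of $(M,g_{\nabla u},m)$, the left-hand sides of the Finsler and Riemannian weighted Bochner identities coincide, so the theorem is equivalent to the cancellation identity
\begin{equation*}
Ric_\infty(\nabla u)+\|\nabla^2u\|^2_{HS(\nabla u)}
= Ric_\infty^{g_{\nabla u},m}(\nabla u)+\|\mathrm{Hess}^{g_{\nabla u}}u\|^2_{HS(g_{\nabla u})},
\end{equation*}
i.e.\ the Cartan-tensor corrections in the curvature term and in the Hessian term must exactly compensate. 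Your argument assumes this by asserting the individual terms agree; proving it is the nontrivial computation in \cite{OS2}, where the Finsler Ricci is computed via a geodesic extension of $\nabla u(x)$ and the discrepancy with $g_{\nabla u}$ is tracked explicitly. Two further points: in your $Ric_N$ step, $b=D\Psi(\nabla u)$ with $\Psi$ the density of $m$ relative to $\mathrm{vol}_{g_{\nabla u}}$ is not the quantity $(\Psi\circ\eta)'(0)$ (an $S$-curvature-type term along the geodesic with $\dot{\eta}(0)=\nabla u$) appearing in the definition of $Ric_N$, so the relation $Ric_\infty-Ric_N=b^2/(N-n)$ also needs justification; and the low-regularity/cut-off issues near $\{Du=0\}$, which you acknowledge, are simply deferred to \cite{OS2}. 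As it stands, then, the proposal does not constitute a proof of the stated formula.
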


\vspace{2mm}
\section{Local Poincar\'e inequality and  Sobolev inequality}

The Sobolev inequality is necessary to run Moser's iteration. In view of the standard theory for general metric measure spaces,  one need a volume doubling condition and a local  uniform Poincar\'e inequality to prove the local Sobolev inequality. From now on, we denote $B_R=B^+_R(p)$ for some $p\in M$ for simplicity.
For $K\geq 0$ and $n\leq N<\infty$, we define
\begin{equation*}
s_{K,N}(t)=\left\{\begin{array}{lll}
\sqrt{\frac{N-1}{K}}\sinh(\sqrt{\frac{K}{N-1}}t), & \hbox{ if }K>0,\\
t, &\hbox{ if }K=0.\end{array}\right.
\end{equation*}

The volume doubling condition follows from the infinitesimal Bishop-Gromov's volume comparison theorem, which was proved by Ohta \cite{Oh1}.
\begin{theorem}[\cite{Oh1}, Theorem 7.3]\label{lem2}
Let $(M, F, m)$ be a complete Finsler manifold  satisfying $Ric_N\geq -K$, $K\geq 0$. For $p\in M$, $v\in T_pM$ with $F(v)=1$, let $\eta:[0,T]\to M$ be the minimal geodesic $\eta(t)=\exp_p(tv)$ for some $T>0$, which does not cross the cut locus of $p$. Suppose $dm(\eta(t))=e^{-V(\dot{\eta}(t))}dvol_{\dot{\eta}(t)}$. 
Then the function $$\frac{e^{-V(\dot{\eta}(t))}t^{n-1} det_{g_{0,t}}(D\exp_p|_{tv})}{s_{K,N}(t)^{N-1}}\hbox{ is nonincreasing},$$
where $det_{g_{0,t}}$ denotes the determinant with respect to $g_{\dot{\eta}(0)}$ and $g_{\dot{\eta}(t)}$ and $D\exp_p|_{tv}$ denotes the differential of the exponential map.
Futhermore, for $R_1\geq R_2$, we have
\begin{eqnarray*}
\frac{m(B_{R_1})}{m(B_{R_2})}\leq \frac{\int_0^{R_1} s_{K,N}^{N-1}(t)dt}{\int_0^{R_2} s_{K,N}^{N-1}(t)dt}\leq e^{2\sqrt{K}R_1}\left(\frac{R_1}{R_2}\right)^N.
\end{eqnarray*}
\end{theorem}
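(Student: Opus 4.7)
The plan is to reduce the claimed monotonicity to a pointwise weighted Riccati comparison along the geodesic $\eta$, and then integrate in geodesic polar coordinates to obtain the volume doubling inequality. First I would pick a $g_v$-orthonormal frame $\{e_1,\ldots,e_{n-1}\}$ of the $g_v$-orthogonal complement of $v$ in $T_pM$ and introduce the Jacobi fields $J_1,\ldots,J_{n-1}$ along $\eta$ with $J_i(0)=0$ and $D^{\dot\eta}_{\dot\eta}J_i(0)=e_i$. Up to a positive constant, $t^{n-1}\det_{g_{0,t}}(D\exp_p|_{tv})$ equals $\det_{g_{\dot\eta(t)}}(J_1(t),\ldots,J_{n-1}(t))$, and if $\theta(t)$ denotes its logarithmic derivative, then the Finsler Jacobi equation together with the definition of $Ric$ as a trace of the flag curvature yields the unweighted Riccati inequality $\theta'(t)+\theta(t)^2/(n-1)\leq -Ric(\dot\eta(t))$.

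Next I would incorporate the weight. Setting $\Psi(t):=V(\dot\eta(t))$ and $\theta_N(t):=\theta(t)-\Psi'(t)$, the quantity $\theta_N$ is exactly the logarithmic derivative of $J(t):=e^{-\Psi(t)}t^{n-1}\det_{g_{0,t}}(D\exp_p|_{tv})$. The elementary Cauchy--Schwarz type inequality
\[
\frac{\theta(t)^2}{n-1}+\frac{\Psi'(t)^2}{N-n}\geq \frac{\theta_N(t)^2}{N-1}
\]
combined with the very definition $Ric_N(\dot\eta)=Ric(\dot\eta)+\Psi''(t)-\Psi'(t)^2/(N-n)$ and the hypothesis $Ric_N\geq -K$ gives
\[
\theta_N'(t)+\frac{\theta_N(t)^2}{N-1}\leq K.
\]
The comparison function $\theta_{K,N}(t):=(N-1)s_{K,N}'(t)/s_{K,N}(t)$ satisfies the corresponding equality, and since $\theta_N(t)-\theta_{K,N}(t)\to -(N-n)/t\leq 0$ as $t\to 0^+$, a Gronwall-type argument on the difference $\phi:=\theta_N-\theta_{K,N}$ (whose linearized inequality has nonnegative coefficient $(\theta_N+\theta_{K,N})/(N-1)$) yields $\theta_N(t)\leq \theta_{K,N}(t)$ on $(0,T]$. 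Integration gives the pointwise monotonicity of $J(t)/s_{K,N}(t)^{N-1}$.

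For the volume ratio, I would write $m(B_R)=\int_{UT_pM}\int_0^{\min(R,t_c(v))}J_v(t)\,dt\,d\sigma(v)$ in geodesic polar coordinates, setting $J_v\equiv 0$ past the cut value (the cut locus has null measure). The pointwise monotonicity and a standard rearrangement then give $m(B_{R_1})/m(B_{R_2})\leq\int_0^{R_1}s_{K,N}^{N-1}/\int_0^{R_2}s_{K,N}^{N-1}$, and the exponential bound $\leq e^{2\sqrt{K}R_1}(R_1/R_2)^N$ is a one-variable calculation using $\sinh(x)\leq e^x/2$. The main obstacle is not the ODE comparison, which is classical, but the correct setup of Jacobi-field calculus in the Finsler setting, where only the \emph{almost} $g$-compatibility of the Chern connection holds; the reference vector must consistently be taken to be $\dot\eta$, and the nonsmoothness of $d_p$ across the cut locus must be controlled. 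These technicalities are fully carried out by Ohta in \cite{Oh1}, so in the present paper the theorem is simply invoked from there.
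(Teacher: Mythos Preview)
Your proposal is correct and, importantly, you correctly identify the key fact: the paper does not prove this theorem at all but simply quotes it from Ohta \cite{Oh1}. Your Riccati-comparison sketch (unweighted Jacobi inequality, then the Cauchy--Schwarz trick $\theta^2/(n-1)+\Psi'^2/(N-n)\geq\theta_N^2/(N-1)$ to pass to the weighted $Ric_N$, followed by ODE comparison with $s_{K,N}$ and integration in polar coordinates) is exactly the standard route and is the one Ohta uses.

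Two small remarks on your sketch. First, the Gronwall step as you phrase it is slightly loose: the coefficient $(\theta_N+\theta_{K,N})/(N-1)$ need not be nonnegative, so the cleanest argument is the usual Riccati comparison (if $\theta_N=\theta_{K,N}$ at some first time then $\theta_N'\leq\theta_{K,N}'$ there, preventing crossing) or, equivalently, Sturm comparison on the linearized equation. Second, the elementary bound you quote, $\sinh(x)\leq e^x/2$, is false for small $x$; the correct pointwise estimate is $\sinh(x)\leq x e^{x}$ (from $\sinh x\leq x\cosh x\leq x e^{x}$), which together with $\sinh(x)\geq x$ gives the stated exponential-times-power control on the ratio of integrals. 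Neither point affects the overall validity of your outline.
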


We now prove the local uniform Poincar\'e inequality. The method may be familiar to experts. However, since we have not found this inequality for the Finsler case in any references, for the readers' convenience, we outline the proof here.
\begin{theorem}\label{Poin}Let $(M, F, m)$ be a forward geodesically complete Finsler manifold  satisfying $Ric_N\geq -K$, $K>0$. Then there exist $c=c(N,\l,\L)$ and $C=C(N,\l,\L)$, depending only on $N$, $\l$ and $\L$, such that
\begin{eqnarray}\label{Po}
\int_{B_R} |u-\bar{u}|^2 dm\leq c e^{C\sqrt{K}R}R^2\int_{B_R} F^2(x,\nabla u) dm
\end{eqnarray}
for $B_R\subset M$ and $u\in W_{\rm{\rm{loc}}}^{1,2}(M)$. Here $\bar{u}=\frac{1}{m(B_R)}\int_{B_R} u dm.$
\end{theorem}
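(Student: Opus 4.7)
The plan is to adapt Buser's segment-inequality proof of the Poincar\'e inequality to the Finsler setting, using the volume comparison from Theorem \ref{lem2} in place of Bishop--Gromov and the reversibility modulus $\rho$ from \eqref{reverse}--\eqref{rho} to compensate for the asymmetry of $F$. The starting point is the symmetric reduction
\begin{eqnarray*}
\int_{B_R}|u-\bar u|^2\,dm \;\leq\; \frac{1}{m(B_R)}\int_{B_R\times B_R}|u(x)-u(y)|^2\,dm(x)\,dm(y).
\end{eqnarray*}
For $x,y\in B_R$, forward completeness together with Hopf--Rinow yields a minimizing unit-speed forward geodesic $\g_{xy}:[0,d(x,y)]\to M$, and the Legendre duality bound $Du(V)\leq F^*(Du)F(V)=F(x,\nabla u)F(x,V)$ gives
\begin{eqnarray*}
u(y)-u(x) \;\leq\; \int_0^{d(x,y)} F(\g_{xy}(t),\nabla u)\,dt.
\end{eqnarray*}
Reversing the direction and using $d(y,x)\leq \rho\, d(x,y)$ from \eqref{rho}, followed by Cauchy--Schwarz, yields the two-sided segment inequality
\begin{eqnarray*}
|u(x)-u(y)|^2 \;\leq\; C(\l,\L)\,d(x,y)\int_0^{d(x,y)} F^2(\g_{xy}(t),\nabla u)\,dt.
\end{eqnarray*}

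The heart of the argument is a Fubini-type change of variables. For fixed $x$, I would parametrize $y=\exp_x(tv)$ with $v$ in the unit tangent sphere $S_xM=\{F(x,v)=1\}$ and $t\in(0,2R)$, so that $dm(y)$ pulls back to $J_x(t,v)\,dt\,d\s(v)$. By Theorem \ref{lem2}, $J_x(t,v)/s_{K,N}^{N-1}(t)$ is non-increasing in $t$, which gives a uniform pointwise bound on $J_x$ over $B_{2R}$ up to the factor $e^{C\sqrt{K}R}$. Interchanging the order of integration in $(y,t)$, the contribution of each geodesic point $z=\g_{xy}(t)$ is governed by an occupation-type weight that is controlled a second time by the same Jacobian bound. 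Assembling these pieces gives
\begin{eqnarray*}
\int_{B_R\times B_R}|u(x)-u(y)|^2\,dm(x)\,dm(y) \;\leq\; C(N,\l,\L)\,e^{C\sqrt{K}R}\,R^2\,m(B_R)\int_{B_R} F^2(x,\nabla u)\,dm,
\end{eqnarray*}
and dividing by $m(B_R)$ concludes the argument.

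The main obstacle is the non-reversibility of $F$ combined with the presence of a cut locus: every backward step in the segment inequality must be reformulated forward via $\rho$, and the Jacobian estimate must be invoked only on the regular part of $\exp_x$ (the cut locus has null measure and does not contribute to any of the integrals above). A secondary technical point is obtaining the gradient integral on the same ball $B_R$ rather than on a slightly enlarged one; this is handled by a standard covering argument in the spirit of Saloff--Coste, resting on the doubling property encoded in Theorem \ref{lem2} and absorbing only constants depending on $N$, $\l$ and $\L$.
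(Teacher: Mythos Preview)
Your proposal is correct and follows essentially the same route as the paper: the segment inequality via minimizing geodesics, the Jacobian control from the infinitesimal Bishop--Gromov comparison (Theorem~\ref{lem2}), and then a Whitney-type covering argument from Saloff--Coste to pass from the weak inequality on an enlarged ball to \eqref{Po} on $B_R$. The only point the paper makes more explicit is the midpoint splitting $\int_0^{d}=\int_0^{d/2}+\int_{d/2}^{d}$, rewriting the first half via the reversed structure $\bar F$ and the identity $F(\nabla u)=\bar F(\bar\nabla(-u))$ so that the Jacobian ratio \eqref{Po3} is always applied with $t\in[d/2,d]$; this is precisely the ``occupation-type weight'' step you allude to, and your use of $\rho$ plays the same role.
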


To prove Theorem \ref{Poin}, we prove first a slightly weak local Poincar\'e inequality.

\begin{lemma}\label{wPoin}
Let $(M, F, m)$ be a forward geodesically complete Finsler manifold  satisfying $Ric_N\geq -K$, $K\geq 0$. Then there exist $c=c(N,\l,\L)$ and $C=C(N,\l,\L)$, depending only on $N$, $\l$ and $\L$,  such that
\begin{eqnarray}\label{wPo}
\int_{B_R} |u-\bar{u}|^2 dm\leq c e^{C\sqrt{K}R}R^2\int_{B_{(\rho+1)^2R}} F^2(x,\nabla u) dm
\end{eqnarray}
for $B_{(\rho+1)^2R}\subset M$ and $u\in W_{\rm{\rm{loc}}}^{1,2}(M)$, where $\rho$ is the modulus of reversibility \eqref{reverse}.
\end{lemma}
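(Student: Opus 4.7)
The plan is to adapt Buser's classical proof of the Riemannian Poincaré inequality to the Finsler setting. First I would start from the variance bound
$$\int_{B_R}|u-\bar u|^2\,dm \;\leq\; \frac{1}{m(B_R)}\int_{B_R\times B_R}|u(x)-u(y)|^2\,dm(x)\,dm(y).$$
For each pair $(x,y)\in B_R\times B_R$, forward completeness yields a unit-speed minimizing forward $F$-geodesic $\gamma_{x,y}:[0,L]\to M$ with $L=d(x,y)$. The triangle inequality together with the reversibility bound \eqref{reverse} gives $L\leq\rho\,d(p,x)+d(p,y)<(\rho+1)R$, so $\gamma_{x,y}\subset B^+_{(\rho+1)^2R}(p)$; this is precisely the reason that \eqref{wPo} must be stated with the inflated ball on the right-hand side. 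Cauchy-Schwarz along $\gamma_{x,y}$, using $|Du(\dot\gamma)|\leq F^*(Du)F(\dot\gamma)=F(\nabla u)$, then yields $|u(x)-u(y)|^2\leq L\int_0^L F^2(\nabla u)(\gamma_{x,y}(s))\,ds$.

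The heart of the proof is the Finsler segment inequality
$$\int_{B_R\times B_R}\int_0^{d(x,y)} f(\gamma_{x,y}(s))\,ds\,dm(x)\,dm(y)\;\leq\; c\,e^{C\sqrt{K}R}\,R\,m(B_R)\int_{B^+_{(\rho+1)^2R}(p)}f\,dm$$
for any nonnegative measurable $f$, which I would prove by splitting the inner integral at $s=L/2$. On $[L/2,L]$, fix $x$ and pass to $F$-polar coordinates at $x$: outside the cut locus every $y$ is $y=\exp_x(Lv)$ with $v\in S_xM$, $\gamma_{x,y}(s)=\exp_x(sv)$, and $dm=\mathcal J_x(t,v)\,dt\,d\omega(v)$. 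Exchanging the $s$- and $L$-integrations restricts $L$ to $[s,\min(2s,L_0(v))]$, on which the monotonicity of $\mathcal J_x(t,v)/s_{K,N}(t)^{N-1}$ from Theorem \ref{lem2} gives $\mathcal J_x(L,v)\leq c\,e^{C\sqrt K R}\mathcal J_x(s,v)$ since the ratio $L/s\leq 2$ keeps the comparison constant bounded. Integrating in $L$ contributes a factor at most $c\,R\,e^{C\sqrt K R}\mathcal J_x(s,v)$; rewriting $\mathcal J_x(s,v)\,ds\,d\omega(v)=dm(z)$ at $z=\exp_x(sv)$ and performing the outer $x$-integration by Fubini then produces the $m(B_R)$-factor and the enlarged ball, using $B^+_{(\rho+1)R}(x)\subset B^+_{(\rho+1)^2R}(p)$. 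The piece $[0,L/2]$ is handled symmetrically: the reversed curve $\tilde\gamma(s'):=\gamma_{x,y}(L-s')$ is a minimizing $\bar F$-geodesic from $y$ to $x$, so $\bar F$-polar coordinates at $y$ reduce this half to the first case.

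The main technical obstacle is to make the second half go through, which requires Theorem \ref{lem2} to be applicable on the reversed Finsler measure space $(M,\bar F,m)$. Because the Cartan tensor is sign-sensitive, $\overline{Ric}_N$ and $Ric_N$ do not coincide under time-reversal in general; however, under the uniform smoothness and convexity \eqref{unif} the ratio $g_V/g_{-V}$ is pinched between $\l/\L$ and $\L/\l$, so the Finsler weight $\bar\Psi$ associated with $\bar F$ differs from $\Psi$ by a function whose first and second derivatives are controlled by $\l,\L$. Consequently $\overline{Ric}_N\geq -K'$ for some $K'=K'(K,\l,\L)$, and Theorem \ref{lem2} applies to $(M,\bar F,m)$ with constants depending only on $N,\l,\L$. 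Combining the two halves with the Cauchy-Schwarz bound of the first paragraph and dividing by $m(B_R)$ then yields \eqref{wPo} with $c=c(N,\l,\L)$ and $C=C(N,\l,\L)$.
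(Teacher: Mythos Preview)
Your overall strategy coincides with the paper's: variance bound, Cauchy--Schwarz along minimizing geodesics, and a segment inequality proved by splitting at the midpoint and handling the two halves in $F$- and $\bar F$-polar coordinates via Bishop--Gromov (Theorem~\ref{lem2}). The gap is in your last paragraph, where you misidentify the behaviour of the weighted Ricci curvature under reversal and then propose a workaround that does not hold.

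In fact $\overline{Ric}_N(-V)=Ric_N(V)$ identically. Since $\bar g_{-V}=g_V$, the $\bar F$-geodesic $\bar\eta(t)=\eta(-t)$ carries the same osculating Riemannian metric as $\eta$, hence the same unweighted Ricci; and decomposing $m=e^{-\bar\Psi}d\mathrm{vol}^{\bar F}_{\dot{\bar\eta}}$ gives $(\bar\Psi\circ\bar\eta)(t)=(\Psi\circ\eta)(-t)$, so the first-derivative squared and the second derivative at $0$ agree with those of $\Psi\circ\eta$. Thus $Ric_N\ge -K$ transfers to $(M,\bar F,m)$ with the \emph{same} constant $K$; this is exactly what the paper invokes when it writes ``the Ricci lower bound is common for $F$ and $\bar F$.'' Your proposed substitute, deducing $\overline{Ric}_N\ge -K'(K,\l,\L)$ from the pinching \eqref{unif}, is not valid: a pointwise bound on $g_V/g_{-V}$ controls only the size of $\bar\Psi-\Psi$, not its first or second derivatives along geodesics, and likewise gives no control on the difference of the unweighted Ricci curvatures. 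A smaller point: $|Du(\dot\gamma)|\le F^*(Du)F(\dot\gamma)$ need not hold for non-reversible $F^*$; the paper instead bounds $u(y)-u(x)$ and $u(x)-u(y)$ separately along $\gamma_{xy}$ and $\gamma_{yx}$, and then uses the identity $F(\nabla u)=\bar F(\bar\nabla(-u))$ to rewrite the $\bar F$-half as an integral of $F^2(\nabla u)$.
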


\begin{proof}
For $x,y\in B_R$, %denote $d=\max\{d(x,y), d(y,x)\}$. Without loss of generality, assume $d=d(x,y).$ 
Let $\gamma_{xy}:[0,d(x,y)]\to M$ be a minimal geodesic from $x$ to $y$ with respect to $F$ with $\dot{\gamma}(0)=v$.  
Notice that $$ -\int_0^{d(y,x)} F(\nabla u(\gamma_{yx}(t)))dt\leq u(y)-u(x)\leq  \int_0^{d(x,y)} F(\nabla u(\gamma_{xy}(t)))dt.$$
Hence we have
\begin{eqnarray}\label{Po1}
&&\int_{B_R} |u-\bar{u}|^2 dm\leq  \frac{1}{m(B_R)} \int_{B_R} \int_{B_R}|u(x)-u(y)|^2 dm(x)dm(y)\nonumber\\
&\leq &\frac{1}{m(B_R)}\int_{B_R}\int_{B_R}\left(\int_0^{d(x,y)} F(\nabla u(\gamma_{xy}(t)))dt+\int_0^{d(y,x)}F(\nabla u(\gamma_{yx}(t)))dt\right)^2 dm(x)dm(y)\nonumber\\
&=&\frac{4}{m(B_R)}\int_{B_R}\int_{B_R}\left(\int_0^{d(x,y)} F(\nabla u(\gamma_{xy}(t)))dt\right)^2 dm(x)dm(y)\nonumber\\
&\leq &\frac{4(\rho+1)R}{m(B_R)}\int_{B_R}\int_{B_R}\int_0^{d(x,y)} F^2(\nabla u(\gamma_{xy}(t))) dt dm(x)dm(y).
\end{eqnarray}
For simplicity of notation, we denote by $d=d(x,y).$ Recall the modulus of reversibility $\rho=\sup_{v\neq 0}\frac{F(v)}{\bar{F}(v)}$. We have \begin{eqnarray}\label{dd}
d\leq d(x,p)+d(p,y)\leq \rho d(p,x)+d(p,y)\leq (\rho+1)R.
\end{eqnarray}

%&=&\frac{4R}{m(B_R)}\int_{B_R}\int_{B_R}\int_{\frac12d}^{d} F^2(\gamma_{xy}(t),\nabla u(\gamma_{xy}(t))) dt dm(x)dm(y)
Note that $\bar{\gamma}_{yx}(t):=\gamma_{xy}(d-t)$ is the geodesic from $y$ to $x$ with respect to the reversed norm $\bar{F}$. It is easy to see from definition of $F^*$ that $F^*(Du)=\bar{F}^*(-Du)$, which implies \begin{eqnarray}\label{FF}
F(\nabla u)=\bar{F}(\bar{\nabla}(-u)),
\end{eqnarray}
 where $\bar{\nabla}(-u)$ is the gradient vector of $-u$ with respect to $\bar{F}$.
Thus
\begin{eqnarray}\label{xxx}&&
 \int_{0}^{d/2}F^2(\nabla u(\gamma_{xy}(t)))dt= \int_{d/2}^{d}F^2(\nabla u(\gamma_{xy}(d-t)))dt=\int_{d/2}^{d}\bar{F}^2(\bar{\nabla}(-u)(\bar{\gamma}_{yx}(t)))dt.
\end{eqnarray}

Replacing \eqref{xxx} into \eqref{Po1}, we have
\begin{eqnarray}\label{yyy}
&&\int_{B_R} |u-\bar{u}|^2 dm\leq  \frac{1}{m(B_R)} \int_{B_R} \int_{B_R}|u(x)-u(y)|^2 dm(x)dm(y)\nonumber\\
&\leq &\frac{4(\rho+1)R}{m(B_R)}\int_{B_R}\int_{B_R}\int_{\frac d2}^{d} \left(F^2(\nabla u(\gamma_{xy}(t)))+ \bar{F}^2(\bar{\nabla}(-u)(\bar{\gamma}_{yx}(t)))\right)dt dm(x)dm(y).\end{eqnarray}

%The last equality holds because $\gamma_{xy}(t)=\gamma_{yx}(d-t)$.
For any $z= \gamma_{xy}(t)$, one can decompose \begin{eqnarray}\label{Po2}dm(z)=e^{-V(\dot{\gamma}_{xy}(t))}dvol_{g_{\dot{\gamma}_{xy}(t))}}=e^{-V(\dot{\gamma}_{xy}(t))}det_{g_{0,t}}(D \exp_x|_{tv})t^{n-1}dtd\xi_x,\end{eqnarray}
where $d\xi_x$ denotes the area form of unit sphere $\{V\in T_xM: F(x,V)=1\}$ in $T_xM$.
Now let $\phi_{x,t}: M\to M$ be the map $\phi_{x,t}(y)=\gamma_{xy}(t)=\exp_x(tv)$. 
From Theorem \ref{lem2}, we know that for $t\in[d/2,d],$
\begin{eqnarray}\label{Po3}
&&\frac{e^{-V(\dot{\gamma}_{xy}(t)}t^{n-1} \det_{g_{0,t}}(D\phi_{x,t})}{e^{-V(\dot{\gamma}_{xy}(d)}d^{n-1}\det_{g_{0,d}}(D\phi_{x,d})}\nonumber\\&\geq&\frac{s_{K,N}(t)^{N-1}}{s_{K,N}(d)^{N-1}}\geq \left(\frac{t}{d}\right)^{N-1}e^{-\sqrt{(N-1)K}t}\geq \frac{1}{2^{N-1}}e^{-(\rho+1)\sqrt{(N-1)K}R}.
\end{eqnarray}
The last inequality follows from \eqref{dd}.
It follows from \eqref{Po2} and \eqref{Po3} that
\begin{eqnarray}\label{eqq1}
&&\\
&&\int_{B_R}\int_{B_R}\int_{\frac d2}^{d} F^2(\nabla u(\gamma_{xy}(t)))dt dm(x)dm(y)\nonumber\\&=&\int_{B_R}\int_{B_R}\int_{\frac12d}^{d} F^2(\phi_{x,t}(y),\nabla u(\phi_{x,t}(y)))  dtdm(y)dm(x)\nonumber\\&\leq&c_Ne^{C_{N,\rho}\sqrt{K}R}\int_{B_R}\int_{B_R} \int_{\frac12d}^{d} F^2(\phi_{x,t}(y),\nabla u(\phi_{x,t}(y))) \frac{e^{-V(\dot{\gamma}_{xy}(t))}t^{n-1}\det_{g_{0,t}}(D\phi_{x,t})}{e^{-V(\dot{\gamma}_{xy}(d))}d^{n-1}\det_{g_{0,d}} (D\phi_{x,d})}dtdm(y)dm(x)\nonumber\\
&\leq& c_Ne^{C_{N,\rho}\sqrt{K}R}\int_0^{(\rho+1)R}\int_{B_R}\int_{B_R}F^2(\phi_{x,t}(y),\nabla u(\phi_{x,t}(y))) \frac{e^{-V(\dot{\gamma}_{xy}(t))}t^{n-1}\det_{g_{0,t}}(D\phi_{x,t})}{e^{-V(\dot{\gamma}_{xy}(d))}d^{n-1}\det_{g_{0,d}} (D\phi_{x,d})}dm(y)dm(x)dt\nonumber\\
&=&c_Ne^{C_{N,\rho}\sqrt{K}R}\int_0^{(\rho+1)R}\int_{B_R}\int_{\phi_{x,t}(B_R)}F^2(z,\nabla u(z)) dm(z)dm(x)dt
\nonumber\\&\leq& (\rho+1)c_NRe^{C_{N,\rho}\sqrt{K}R}m(B_R)\int_{B_{(\rho+2)R}} F^2(z,\nabla u(z)) dm(z).\nonumber\end{eqnarray}
Since the Ricci lower bound is common for $F$ and $\bar{F}$, the above computation still holds for $\bar{F}$ and $\bar{\gamma}_{yx}$, i.e.,
we have
\begin{eqnarray}\label{eqq2}
&&\int_{B_R}\int_{B_R}\int_{\frac d2}^{d} \bar{F}^2(\bar{\nabla} (-u)(\bar{\gamma}_{yx}(t)))dt dm(x)dm(y)\\&\leq &(\rho+1)c_NRe^{C_{N,\rho}\sqrt{K}R}m(B_R)\int_{B_{(\rho+1)^2R}} \bar{F}^2(z,\bar{\nabla} (-u)(z)) dm(z).\nonumber\end{eqnarray}
Here $B_{(\rho+2)R}$ is replace by $B_{(\rho+1)^2R}$  since $d(p,z)\leq d(p,x)+d(x,z)\leq R+\rho\bar{d}(x,z)\leq R+\rho(\rho+1)R\leq (\rho+1)^2R$.

Substituting \eqref{eqq1} and \eqref{eqq2} into \eqref{yyy}, and use the fact \eqref{FF}, we conclude that 
\begin{eqnarray*}
\int_{B_R} |u-\bar{u}|^2 dm\leq 8c_N(\rho+1)^2R^2e^{C_{N,\rho}\sqrt{K}R}\int_{B_{(\rho+1)^2R}} F^2(z,\nabla u(z)) dm(z).
\end{eqnarray*}
 We finish the proof of Lemma \ref{wPoin} by virtue of  \eqref{rho}.
\end{proof}

 \noindent\textit{Proof of Theorem \ref{Poin}:} By theWhitney-type covering argument (Corollary 5.3.5
in \cite{SC}), the uniform weak Poincar\'e inequality \eqref{wPo} can be improved in a standard way to the uniform
Poincar\'e inequality \eqref{Po}.\qed

\vspace{1mm}
As long as  the uniform local Poincar\'e inequality, Theorem \ref{Poin} and the
Bishop-Gromov volume comparison theorem, Theorem \ref{lem2}, are available, one can
follow the same argument of Lemma 3.5 by setting $A(R)=\sqrt{K}R$ in
Munteanu-Wang \cite{MW} (see also \cite{HK}) to prove the following
local uniform Sobolev inequality. In fact, their argument only used the structure of metric spaces. Here one only needs  to be careful of the non-revesiblity of $F$ as in the proof of Lemma \ref{wPoin}.

\begin{theorem}\label{Sob}Let $(M, F, m)$ be a forward geodesically complete Finsler manifold  satisfying $Ric_N\geq -K$, $K>0$. Then there exist   constants $\nu>2$ and $C=C(N,\l,\L)$ depending only on $N$, $\l$ and $\L$, such that
\begin{eqnarray}\label{Sobolev0} 
\left(\int_{B_R} |u-\bar{u}|^{\frac{2\nu}{\nu-2}} dm\right)^{\frac{\nu-2}{\nu}} \leq e^{C(1+\sqrt{K}R)}R^2m(B_R)^{-\frac2\nu}\int_{B_R} {F^*}^2(x, Du)dm\end{eqnarray}
for $B_R\subset M$ and $u\in W_{\rm{\rm{loc}}}^{1,2}(M)$.
Consequently, 
\begin{eqnarray}\label{Sobolev} 
\left(\int_{B_R} |u|^{\frac{2\nu}{\nu-2}} dm\right)^{\frac{\nu-2}{\nu}} \leq e^{C(1+\sqrt{K}R)}R^2m(B_R)^{-\frac2\nu}\int_{B_R} {F^*}^2(x, Du)+R^{-2}u^2 dm.\end{eqnarray}
\end{theorem}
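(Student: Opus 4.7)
The plan is to deduce the Sobolev inequality from the volume doubling property (Theorem \ref{lem2}) and the local uniform Poincar\'e inequality (Theorem \ref{Poin}) via the general machinery for metric measure spaces developed by Hajłasz–Koskela and Saloff-Coste, as carried out in Munteanu–Wang \cite{MW}. Since the argument in \cite{MW} is carried out purely at the level of a metric measure space with doubling and $L^2$-Poincaré, the non-Riemannian (and in particular non-reversible) nature of $F$ enters only through the distance $d$, the measure $m$, and the integrand $F^{*2}(x,Du)$ on the right-hand side; thus all that is needed is to check that the two structural inputs hold with constants depending only on $N, \lambda, \Lambda$.

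First, I would extract from Theorem \ref{lem2} the doubling bound
\[
\frac{m(B_{2R})}{m(B_R)}\leq 2^N e^{2\sqrt{K}(2R)},
\]
so that the doubling constant on $B_R$ is controlled by $e^{C\sqrt{K}R}$ with $C=C(N)$. Next, Theorem \ref{Poin} gives the $L^2$–$L^2$ Poincar\'e inequality on balls with the same exponential growth factor $e^{C\sqrt{K}R}$. By the Hajłasz–Koskela / Saloff-Coste theorem (Corollary 5.3.5 of \cite{SC} together with the Jerison-type iteration used in \cite{MW}, Lemma 3.5), doubling plus an $L^2$-Poincar\'e inequality with constants of the form $e^{A(R)}R^2$ implies an $L^2$-Sobolev–Poincar\'e inequality with some exponent $\nu>2$, namely \eqref{Sobolev0}, where $\nu$ depends only on $N$ (essentially through the doubling exponent) and the constant $C$ in the exponent depends only on $N,\lambda,\Lambda$. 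The only care required relative to \cite{MW} is that $F$ is non-reversible; but as in the proof of Lemma \ref{wPoin}, we systematically replace integrals along reverse segments $\gamma_{yx}$ by integrals of $\bar F(\bar\nabla(-u))$ along $\bar\gamma_{yx}$ using \eqref{FF}, and then use \eqref{rho} to bound $\bar F^2$ by $F^2$ up to a factor depending only on $\lambda,\Lambda$. Since Ohta's volume comparison applies equally to $F$ and $\bar F$, this substitution introduces no new ingredient.

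Finally, \eqref{Sobolev} is obtained from \eqref{Sobolev0} by a standard triangle-inequality argument: writing $u = (u-\bar u) + \bar u$, applying the quasi-triangle inequality in $L^{2\nu/(\nu-2)}$, and bounding $|\bar u|\,m(B_R)^{(\nu-2)/(2\nu)}$ by $\bigl(\int_{B_R}u^2\,dm\bigr)^{1/2}m(B_R)^{-1/\nu}$ via Cauchy–Schwarz; multiplying this $L^2$ term by $R^{-2}\cdot R^2$ absorbs it into the right-hand side of \eqref{Sobolev}.

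The main obstacle, and the reason the author defers to \cite{MW, HK, SC}, is the Whitney/Jerison-style iteration that upgrades a Poincar\'e inequality on balls to the Sobolev–Poincar\'e inequality with a gain in integrability exponent; this is where the exponent $\nu>2$ emerges out of the doubling dimension and where the dependence of constants on $N, \lambda, \Lambda$ must be tracked carefully. In the Finsler setting the additional bookkeeping is to verify that every covering and chaining step can be performed with respect to the (possibly asymmetric) distance $d$, using $\rho\leq\sqrt{\Lambda/\lambda}$ from \eqref{rho} to pass between forward and backward balls without losing quantitative control.
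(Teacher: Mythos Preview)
Your proposal is correct and follows essentially the same route as the paper: the author simply notes that once the volume doubling (Theorem \ref{lem2}) and the uniform Poincar\'e inequality (Theorem \ref{Poin}) are in hand, one can follow the argument of Lemma 3.5 in Munteanu--Wang \cite{MW} (with $A(R)=\sqrt{K}R$) verbatim, taking care of non-reversibility exactly as you describe using the device from the proof of Lemma \ref{wPoin}. The paper does not spell out the passage from \eqref{Sobolev0} to \eqref{Sobolev}, but your triangle-inequality/Cauchy--Schwarz derivation is the standard one and is consistent with the author's intent.
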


\

\section{Proof of Theorem \ref{main thm}}
Let $u$ be a positive harmonic function on $B_{2R}$, $\Delta_m u=0$. It was proved that $u\in W_{\rm{\rm{loc}}}^{2,2}(B_{2R})\bigcap C^{1,\alpha}(B_{2R})$ (see \cite{OS1}). Denote $v=\log u$. One can easily verify that 
\begin{eqnarray}\label{Peq1}
\Delta_m v=-F^2(x,\nabla v).
\end{eqnarray}

Let $f(x)=F^2(x,\nabla v)$. Then $f\in  W_{\rm{\rm{loc}}}^{1,2}(B_{2R})\bigcap C^{\alpha}(B_{2R})$. It follows from the Bochner formula in Theorem \ref{BW formula} and \eqref{Peq1}   that for $0\leq\eta\in W_0^{1,2}(B_{2R})\bigcap L^\infty(B_{2R})$,
\begin{eqnarray}\label{Peq2} \int_{M} D\eta\left(\nabla^{\nabla v}f\right) dm\leq \int_{M} \eta\left(2Df(\nabla v)-2Ric_N(\nabla v)-\frac{2f^2}{N}\right) dm.\end{eqnarray}
Note that, it follows from Lemma 3.5 in \cite{Oh1}  that $\nabla^{\nabla v} f=0\hbox{ a.e. on }  f^{-1}(0)=M\setminus M_v$.  Therefore the LHS of \eqref{Peq2} is actually integrated over $M_v\bigcap B_{2R}$.

Let $\eta=\phi^2f^\beta$, with $\phi\in C_0^\infty({B_{2R}})$, $0\leq\phi\leq 1$, and $\beta>1$. Then $\eta$ is an admissible test function for \eqref{Peq2}. Hence we have from \eqref{Peq2} and $Ric_N\geq -K$ that
\begin{eqnarray}\label{Peq3} 
&&\int_{M_v\bigcap B_{2R}} \beta\phi^2f^{\beta-1}g^{ij}(x,\nabla v)f_if_j+2\phi f^\beta g^{ij}(x,\nabla v)f_i\phi_j dm\nonumber\\&\leq &\int_{B_{2R}} \phi^2f^\beta \left(2Df(\nabla v)+2Kf-\frac{2f^2}{N}\right) dm.
\end{eqnarray}
On $M_v$, we know from \eqref{unif1} and Cauchy-Schwarz inequality that
\begin{eqnarray*}\label{Peq4} 
g^{ij}(x,\nabla v)f_if_j&\geq&\tilde{\l} F(x,\nabla f)^2,\\
|g^{ij}(x,\nabla v)f_i\phi_j|&\leq& \tilde{\Lambda} F(x,\nabla f)F^*(x,D\phi).
\end{eqnarray*}
Hence 
\begin{eqnarray}\label{Peq5} 
\int_{M_v\bigcap B_{2R}} \beta\phi^2f^{\beta-1}g^{ij}(x,\nabla v)f_if_j dm&\geq &\tilde{\l}\int_{M_v} \beta\phi^2 f^{\beta-1}F(x,\nabla f)^2 dm
\nonumber\\&=& \tilde{\l} \int_{{B_{2R}}} \beta\phi^2 f^{\beta-1}F(x,\nabla f)^2 dm,
\end{eqnarray}
\begin{eqnarray}\label{Peq6} 
\int_{M_v\bigcap B_{2R}} -2\phi f^\beta g^{ij}(x,\nabla v)f_i\phi_j dm&\leq& \tilde{\Lambda}\int_{M_v\bigcap B_{2R}} 2\phi f^\beta  F(x,\nabla f)F^*(x,D\phi) dm
\nonumber\\&=&\tilde{\Lambda}\int_{B_{2R}}  2\phi f^\beta  F(x,\nabla f)F^*(x,D\phi) dm.
\end{eqnarray}
Combining \eqref{Peq3} -- \eqref{Peq6}, and taking into account that $$Df(\nabla v)\leq F(x,\nabla v)F(x,\nabla f)=f^\frac12 F(x,\nabla f),$$ we have
\begin{eqnarray*}\label{Peq7} 
\tilde{\l} \frac{4\beta}{(\beta+1)^2}\int_{B_{2R}} \phi^2 F^2(x,\nabla f^{\frac{\beta+1}{2}}) &\leq& \tilde{\Lambda} \frac{4}{\beta+1}\int_{B_{2R}} \phi f^{\frac{\beta+1}{2}}F^*(x,D\phi)  F(x,\nabla f^{\frac{\beta+1}{2}})\nonumber\\&&+\frac{4}{\beta+1}\int_{B_{2R}} \phi^2f^{\frac{\beta+2}{2}}F(x,\nabla f^{\frac{\beta+1}{2}})\nonumber\\&&-\int_{B_{2R}} \frac{2}{N}\phi^2f^{\beta+2}+\int_{B_{2R}} 2K\phi^2f^{\beta+1}.
\end{eqnarray*}
Using H\"older inequality, we obtain
\begin{eqnarray*}
\int_{B_{2R}} \phi^2 F^2(x,\nabla f^{\frac{\beta+1}{2}}) &\leq &C_1\int_{B_{2R}} {F^*}^2(x,D\phi)  f^{\beta+1}+C_2\int_{B_{2R}} \phi^2f^{\beta+2}\nonumber\\&&-C_3 \beta\int_{B_{2R}} \phi^2f^{\beta+2}+C_4\beta K\int_{B_{2R}} \phi^2f^{\beta+1}.
\end{eqnarray*}
We remark that from now on, the constant $C_1, C_2, \cdots,$ depend only on $N,\l,\Lambda$.

Notice that $F^*(x,Df)=F(x,\nabla f)$. For $\beta\geq \frac{2C_2}{C_3}$, we have
\begin{eqnarray}\label{Peq8} 
&&\int_{B_{2R}}  {F^*}^2(x,D(\phi f^{\frac{\beta+1}{2}}))+\frac12 C_3\beta\int_{B_{2R}} \phi^2f^{\beta+2} \nonumber\\&\leq &2C_1\int_{B_{2R}} {F^*}^2(x,D\phi)  f^{\beta+1}+C_4\beta K\int_{B_{2R}} \phi^2f^{\beta+1}.\end{eqnarray}
%\begin{eqnarray}\label{Peq9} 
%\beta\int_M \phi^2 f^{\beta+2}\leq C_4\int_M |D\phi|^2f^{\beta+1}+C\beta K\int_M \phi^2f^{\beta+1}.\end{eqnarray}
%\begin{eqnarray}\label{Peq14} 
%\int_M  {F^*}^2(x,D(\phi f^{\frac{\beta+1}{2}})) \leq C_7\int_M |D\phi|^2 f^{\beta+1}\end{eqnarray}
Using Sobolev inequality \eqref{Sobolev},
we obtain
\begin{eqnarray}\label{Peq15} 
\left(\int_{B_{2R}}\phi^{2\chi} f^{(\beta+1)\chi} \right)^{\frac{1}{\chi}}& \leq& e^{C_5(1+\sqrt{K}R)}R^2m(B_{2R})^{-\frac2\nu}\bigg(C_6\int_{B_{2R}} {F^*}^2(x,D\phi)  f^{\beta+1}\nonumber\\&&+C_7(\beta K+R^{-2})\int_{B_{2R}} \phi^2f^{\beta+1}-\beta\int_{B_{2R}} \phi^2f^{\beta+2}\bigg),\end{eqnarray}
where $\chi=\frac{\nu}{\nu-2}$.

%Starting from \eqref{Peq9}, we can prove the following
%\begin{lemma}\label{lem1}There exists some $\beta_1$ large enough and some constant $C=C(n,\l,\Lambda)$, such that \begin{eqnarray}\label{Peq10} 
%\|f\|_{L^{\beta_1}(B_{\frac32R})}  &\leq &C (\beta_1+KR^2) R^{-2}m(B_{2R})^{\frac{1}{\beta_1}}.\end{eqnarray}
%\end{lemma}
%\begin{proof}
%Let $\phi=\psi^{\beta+2}$ with $\psi\in C_0^\infty(B_{2R}(x))$ satisfying
%\begin{eqnarray}\label{Peq11} 
%%0\leq \psi\leq 1,\quad \psi\equiv 1 \hbox{ in } B_{\frac32R},\quad |D\psi|\leq \frac{C}{R}.\end{eqnarray}
%It follows from \eqref{Peq9} that 
%\begin{eqnarray}\label{Peq12} \beta\int_{B_R} \phi^2f^{\beta+2}\leq C_5(\beta+2)^2 R^{-2}\int_{B_{2R}}\phi^{\frac{2(\beta+1)}{\beta+2}}f^{\beta+1}+C\beta K\int_M \phi^2f^{\beta+1}.\end{eqnarray}
%Using H\"older inequality, we deduce
%\begin{eqnarray}\label{Peq13} \int_{B_{2R}} \phi^2f^{\beta+2}\leq C_6(\beta R^{-2}+K)\left\{\int_{B_{2R}}\phi^2f^{\beta+2}\right\}^{\frac{\beta+1}{\beta+2}}m(B_{2R})^{\frac{1}{\beta+2}},\end{eqnarray}
%which implies \eqref{Peq10} for $\beta_1=\beta+2$.
%\end{proof}

\vspace{2mm}
We first use \eqref{Peq15} to prove the following lemma.
\begin{lemma}\label{lem1}There exists  some large positive constant $C=C(N,\l,\L)$ depending on $N, \l$ and $\L$, such that  for $\beta_0=C(1+\sqrt{K}R)$ and $\beta_1=(\beta_0+1)\chi$,  we have $f\in L^{\beta_1}(B_{\frac32R})$ and \begin{eqnarray}\label{Peq10} 
\|f\|_{L^{\beta_1}\big(B_{\frac32R}\big)}  &\leq &C_8 \frac{(1+\sqrt{K}R)^2}{R^2}m(B_{2R})^{\frac{1}{\beta_1}}.\end{eqnarray}
\end{lemma}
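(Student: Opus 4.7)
The plan is to apply the Caccioppoli-type inequality \eqref{Peq15} with the exponent $\beta=\beta_0:=C^{\ast}(1+\sqrt{K}R)$ (where $C^{\ast}=C^{\ast}(N,\lambda,\Lambda)$ is a sufficiently large constant to be chosen later) and with the cutoff $\phi=\psi^{\beta_0+2}$, where $\psi\in C_0^\infty(B_{2R})$ is standard: $\psi\equiv1$ on $B_{3R/2}$, $0\leq\psi\leq1$, $|D\psi|\leq 2/R$. The high power is dictated by the pointwise bound
\[
|D\phi|^2\leq\bigl(2(\beta_0+2)/R\bigr)^2\,\phi^{2(\beta_0+1)/(\beta_0+2)},
\]
whose fractional exponent matches the Young pair $\bigl((\beta_0+2)/(\beta_0+1),\beta_0+2\bigr)$ to be used below.

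The main step is to absorb both positive integrals on the right of \eqref{Peq15} into the negative term $-\beta_0\int\phi^2 f^{\beta_0+2}\,dm$ via pointwise Young, leaving only an integrable residual. For the zeroth-order contribution, the scalar inequality $ay^{\beta+1}-by^{\beta+2}\leq a^{\beta+2}(\beta+1)^{\beta+1}/[(\beta+2)^{\beta+2}b^{\beta+1}]$, used with $a=C_7(\beta_0K+R^{-2})$, $b=\beta_0/2$, $y=f$, together with $K+(\beta_0R^2)^{-1}\leq (1+\sqrt{K}R)^2/R^2$, yields
\[
C_7(\beta_0K+R^{-2})\phi^2 f^{\beta_0+1}-\tfrac{\beta_0}{2}\phi^2 f^{\beta_0+2}\leq C^{\beta_0+2}\bigl[(1+\sqrt{K}R)^2/R^2\bigr]^{\beta_0+2}\phi^2.
\]
For the gradient contribution, I rewrite $\phi^{2(\beta_0+1)/(\beta_0+2)}f^{\beta_0+1}=(\phi^2 f^{\beta_0+2})^{(\beta_0+1)/(\beta_0+2)}$ and apply the scaled weighted AM-GM $a^p b^{1-p}\leq pa+(1-p)b$ with $p=(\beta_0+1)/(\beta_0+2)$, $a=\phi^2 f^{\beta_0+2}$, $b=1$, yielding the analogous estimate. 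Summing and integrating then bounds the right-hand side of \eqref{Peq15} by $C^{\beta_0+2}[(1+\sqrt{K}R)^2/R^2]^{\beta_0+2}\,m(B_{2R})$ times the prefactor $e^{C_5(1+\sqrt{K}R)}R^2 m(B_{2R})^{-2/\nu}$.

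Taking $(\beta_0+1)$-th roots and using $1/\beta_1=(1-2/\nu)/(\beta_0+1)$, the volume factor naturally rearranges to $m(B_{2R})^{1/\beta_1}$. The remaining powers simplify through $R^{2/(\beta_0+1)}\cdot[(1+\sqrt{K}R)^2/R^2]^{(\beta_0+2)/(\beta_0+1)}=(1+\sqrt{K}R)^{2(\beta_0+2)/(\beta_0+1)}/R^2$; the extra factor $(1+\sqrt{K}R)^{2/(\beta_0+1)}$ and the prefactor $e^{C_5(1+\sqrt{K}R)/(\beta_0+1)}$ are both bounded by constants depending only on $C^{\ast}$ via $\log(1+x)\leq x$, which is where the scaling $\beta_0\sim 1+\sqrt{K}R$ matters. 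Choosing $C^{\ast}$ large enough, and absorbing $C^{(\beta_0+2)/(\beta_0+1)}\leq C^2$, produces the claim.

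The chief obstacle is the $|D\phi|^2 f^{\beta_0+1}$ term. With a naively chosen linear cutoff the ratio $|D\phi|^2/\phi^2$ blows up near $\partial B_{2R}$, so this term cannot be absorbed directly into the negative term (which is localized where $\phi>0$). The trick $\phi=\psi^{\beta_0+2}$ forces exactly the right fractional power of $\phi$ on $|D\phi|^2$, at the cost of a prefactor $(\beta_0+2)^2$; it is precisely the scaling $\beta_0\sim 1+\sqrt{K}R$ that swallows this prefactor into the advertised $(1+\sqrt{K}R)^2/R^2$ on the right-hand side.
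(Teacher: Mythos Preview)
Your argument is correct and follows essentially the same route as the paper: choose $\beta_0\sim 1+\sqrt{K}R$, take the high-power cutoff $\phi=\psi^{\beta_0+2}$, absorb both positive terms on the right of \eqref{Peq15} into the good term $-\beta_0\int\phi^2 f^{\beta_0+2}$, and take $(\beta_0+1)$-th roots. The only cosmetic differences are that the paper handles the zeroth-order term by splitting over $\{f\ge 2C_{11}\beta_0^2R^{-2}\}$ and its complement (equivalent to your pointwise maximization of $ay^{\beta+1}-by^{\beta+2}$) and treats the gradient term via integral H\"older then Young rather than pointwise; note, however, that for the gradient absorption you need a genuinely \emph{scaled} Young inequality with a free $\varepsilon$, since the bare $a^p b^{1-p}\le pa+(1-p)b$ you wrote produces a coefficient $\sim\beta_0^2/R^2$ on $\phi^2 f^{\beta_0+2}$, which is not in general $\le\beta_0/2$.
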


\begin{proof}
Let $C_9\geq \frac{2C_2}{C_3}$ such that $\beta_0=C_9(1+\sqrt{K}R)$ satisfies \eqref{Peq8} and \eqref{Peq15}.
we rewrite \eqref{Peq15} as
\begin{eqnarray}\label{Qeq0} 
\left(\int_{B_{2R}}\phi^{2\chi} f^{(\beta_0+1)\chi} \right)^{\frac{1}{\chi}}& \leq &e^{C_{10}\beta_0}m(B_{2R})^{-\frac2\nu}\bigg(C_6R^2\int_{B_{2R}} {F^*}^2(x,D\phi)f^{\beta_0+1}\nonumber\\&&+C_{11}\beta_0^3\int_{B_{2R}} \phi^2f^{\beta_0+1}-\beta_0R^2\int_{B_{2R}} \phi^2f^{\beta_0+2}\bigg).\end{eqnarray}

 We estimate the second term in RHS of \eqref{Qeq0} as follows,
\begin{eqnarray}\label{Qeq1} C_{11}\beta_0^3\int_{B_{2R}} \phi^2f^{\beta_0+1}&=&
C_{11}\beta_0^3\left(\int_{\{f\geq 2C_{11}\beta_0^2R^{-2}\}} \phi^2f^{\beta_0+1}+\int_{\{f< 2C_{11}\beta_0^2R^{-2}\}} \phi^2f^{\beta_0+1}\right)\nonumber\\
&\leq &\frac12\beta_0R^2 \int_{B_{2R}} \phi^2f^{\beta_0+2}+C_{12}\beta_0^3(\frac{\beta_0}{R})^{2(\beta_0+1)}m(B_{2R}).\end{eqnarray}
For the first term in RHS of \eqref{Qeq0}, we let $\phi=\psi^{{\beta_0+2}}$ with $\psi(x)=\tilde{\psi}(d_p(x))\in C_0^\infty(B_{2R})$ satisfying
\begin{eqnarray*}\label{Peq11} 
0\leq \tilde{\psi}\leq 1,\quad \tilde{\psi}\equiv 1 \hbox{ in } [0,\frac32 R),\quad |\tilde{\psi}'|\leq \frac{C_{13}}{R}.
\end{eqnarray*}
Since $F^*(x,Dd_p)=1$  a.e., $\psi$ satisfies 
\begin{eqnarray*}\label{Peq99} 
0\leq \psi\leq 1,\quad \psi\equiv 1 \hbox{ in } B_{\frac32R},\quad {F^*}(x,D\psi)\leq \frac{C}{R}.\end{eqnarray*}
Hence $R^2{F^*}^2(x,D\phi)\leq C_{14}\beta_0^2\phi^{\frac{2(\beta_0+1)}{\beta_0+2}}$.
Then by H\"older and Young inequalities, 
\begin{eqnarray}\label{Qeq2}
C_6R^2\int_{B_{2R}} {F^*}^2(x,D\phi) f^{\beta_0+1}
&\leq& C_{15}\beta_0^2\int_{B_{2R}} \phi^{\frac{2(\beta_0+1)}{\beta_0+2}}f^{\beta_0+1}\nonumber\\
&\leq &C_{15}\beta_0^2\left(\int_{B_{2R}} \phi^{2}f^{\beta_0+2}\right)^{\frac{\beta_0+1}{\beta_0+2}}m(B_{2R})^{\frac{1}{\beta_0+2}}\nonumber\\
&\leq& \frac12\beta_0R^2\int_{B_{2R}} \phi^{2}f^{\beta_0+2}+C_{16}\beta_0^{\beta_0+3}R^{-2(\beta_0+1)}m(B_{2R}).
\end{eqnarray}
Replacing the estimates \eqref{Qeq1} and \eqref{Qeq2} into \eqref{Qeq0}, we obtain
\begin{eqnarray*}\label{Qeq3} 
\left(\int_{B_{2R}}\phi^{2\chi} f^{(\beta_0+1)\chi} \right)^{\frac{1}{\chi}} \leq e^{C_{10}\beta_0}(C_{12}+C_{16})\beta_0^3(\frac{\beta_0}{R})^{2(\beta_0+1)}m(B_{2R})^{1-\frac2\nu}.\end{eqnarray*}
Taking the $(\beta_0+1)$-th roots on both sides, we get
\begin{eqnarray*}\label{Qeq4} \|f\|_{L^{\beta_1}(B_{\frac32R})} \leq C_{17}(\frac{\beta_0}{R})^{2}m(B_{2R})^{\frac{1}{\beta_1}}.\end{eqnarray*}
\end{proof}

Now we start from \eqref{Peq15} and use the standard Moser iteration to prove Theorem \ref{main thm}.
Let $R_k=R+\frac{R}{2^k}$, $\phi_k\in C_0^\infty(B_{R_k})$ satisfy
\begin{eqnarray*}\label{Peq16} 
0\leq \phi_k\leq 1,\quad \phi_k\equiv 1 \hbox{ in } B_{R_{k+1}},\quad {F^*}(x,D\phi_k)\leq C\frac{2^k}{R}.\end{eqnarray*}
Let $\beta_0,\beta_1$ be the number in Lemma \ref{lem1} and $\beta_{k+1}=\beta_k\chi$ for $k\geq 1$, one can deduce from \eqref{Peq15}  with $\beta+1=\beta_k$ and $\phi=\phi_k$ that (we have dropped the third term in the RHS of \eqref{Peq15} since it is negative)
\begin{eqnarray*}\label{Peq17} \|f\|_{L^{\beta_{k+1}}(B_{R_{k+1}})}\leq e^{C_{18}\frac{\beta_0}{\beta_k}}m(B_{2R})^{-\frac2\nu\frac{1}{\beta_k}}(4^{k}+\beta_0^2\beta_k)^{\frac{1}{\beta_k}}\|f\|_{L^{\beta_{k}}(B_{R_{k}})}.\end{eqnarray*}
Hence by iteration we get
\begin{eqnarray*}\label{Peq18} \|f\|_{L^\infty(B_{R})}\leq  e^{C_{18}\beta_0\sum_k \frac{1}{\beta_k}}m(B_{2R})^{-\frac2\nu \sum_k \frac{1}{\beta_k}}\prod_{k}(4^{k}+\beta_0^3\chi^k)^{\frac{1}{\beta_k}}\|f\|_{L^{\beta_{1}}(B_{\frac32 R})}. \end{eqnarray*}
Since $\sum_k \frac{1}{\beta_k}=\frac{\nu}{2}\frac{1}{\beta_1}$ and $\sum_k \frac{k}{\beta_k}$ converges,
we have
\begin{eqnarray*}\label{Peq19} \|f\|_{L^\infty(B_{R})}&\leq&  C_{19}e^{C_{20}\frac{\beta_0}{\beta_1}}\beta_0^{\frac{3\nu}{2}\frac{1}{\beta_1}}m(B_{2R})^{-\frac{1}{\beta_1}}\|f\|_{L^{\beta_{1}}(B_{\frac32 R})}\nonumber\\&\leq &C_{21}m(B_{2R})^{-\frac{1}{\beta_1}}\|f\|_{L^{\beta_{1}}(B_{\frac32 R})}.\end{eqnarray*}
Using Lemma \ref{lem1}, we conclude
\begin{eqnarray*}\label{Peq20} \|f\|_{L^\infty(B_{R})}\leq C\frac{(1+\sqrt{K}R)^2}{R^2},\end{eqnarray*}
which implies 
\begin{eqnarray*}\label{Peq21} \|F(x,\nabla \log u)\|_{L^\infty(B_{R})}\leq C\frac{1+\sqrt{K}R}{R}.\end{eqnarray*}
For $F(x,\nabla(- \log u))$, the same argument works. Thus we finish the proof of Theorem \ref{main thm}.
\qed

\vspace{2mm}
We now prove Corollary \ref{11}-\ref{13}.

\noindent\textit{Proof of Corollary \ref{11}.} Let $y,z\in B_R$ and $\gamma:[0,1]\to M$ be a minimizing geodesic from $y$ to $z$.
Then \begin{eqnarray*}
\log u(y)-\log u(z)&=&\int_0^1 \frac{d}{dt}\log u(\gamma(t))dt\\&\leq &(\rho+1)R \max_{x\in B_R} F(x,\nabla\log u(x))\leq C(1+\sqrt{K}R).
\end{eqnarray*}
The same holds for $\log u(z)-\log u(y)$. We finish the proof.
\qed

\vspace{2mm}
\noindent\textit{Proof of Corollary \ref{12}.} Letting $R\to \infty$ in Theorem \ref{main thm}, we get that $F(x, \nabla \log u)=F(x, \nabla (-\log u))=0$. Hence $u$ is a constant.
\qed

\vspace{2mm}
\noindent\textit{Proof of Corollary \ref{13}.} Let $v(x)=2\max_{B_{2R}} |u|+u(x)$. $v$ is a positive harmonic function on $(M,F,m)$. Thus
$$\max_{B_R}F(x,\nabla u)=\max_{B_R}F(x,\nabla v)\leq C\frac{2\max_{B_{2R}} |u|+u(x)}{R}\leq C\frac{3\max_{B_{2R}} |u|}{R}.$$
By letting $R\to\infty$, we see from the growth assumption  that the RHS tends to $0$. Hence $u$ is a constant.
\qed

\

\noindent\textbf{Acknowledgments.}  I would like to thank Dr. Bobo Hua for his helpful discussions. I want to express my gratitude to Prof. J\"urgen Jost and Prof. Guofang Wang for their constant encouragement.  I am also grateful to the anonymous referee for his/her careful reading and comments, especially for his correction in the proof of Lemma 3.1 for non-reversible $F$.

\

\end{document}